\providecommand{\U}[1]{\protect\rule{.1in}{.1in}}
\newtheorem{condition}{Condition}
\newtheorem{algorithm}[definition]{Algorithm}
\numberwithin{lemma}{section}
\numberwithin{proposition}{section}
\numberwithin{theorem}{section}
\numberwithin{definition}{section}
\begin{document}

\title{Bounded Perturbation Resilience of Projected Scaled Gradient Methods }
\author{Wenma Jin
\and Yair Censor
\and Ming Jiang}
\authorrunning{W. Jin, Y. Censor and M. Jiang}
\institute{
Wenma Jin
\at
LMAM, School of Mathematical Sciences, Peking University, Beijing 100871, China.
\and
Yair Censor \at
Department of Mathematics, University of Haifa, Mt. Carmel, Haifa 3498838, Israel.
\and
Ming Jiang$^{1,2}$ \at
$^{1}$LMAM, School of Mathematical Sciences, and Beijing International Center
for Mathematical Research, Peking University, Beijing 100871, China. \\
$^{2}$Cooperative Medianet Innovation Center, Shanghai Jiao Tong University,
Shanghai 200240, China.
}
\date{Received: February 2, 2014 /Revised: April 18, 2015 / Accepted: date}
\maketitle

\begin{abstract}
We investigate projected scaled gradient (PSG) methods for convex minimization
problems. These methods perform a descent step along a diagonally scaled
gradient direction followed by a feasibility regaining step via orthogonal
projection onto the constraint set. This constitutes a generalized algorithmic
structure that encompasses as special cases the gradient projection method,
the projected Newton method, the projected Landweber-type methods and the
generalized Expectation-Maximization (EM)-type methods. We prove the
convergence of the PSG methods in the presence of bounded perturbations. This
resilience to bounded perturbations is relevant to the ability to apply the
recently developed superiorization methodology to PSG methods, in particular
to the EM algorithm.


\end{abstract}

\titlerunning{Bounded Perturbation Resilience of Projected Scaled Gradient Methods}

\authorrunning{W. Jin, Y. Censor and M. Jiang}

\institute{
Wenma Jin \at
LMAM, School of Mathematical Sciences, Peking University, Beijing 100871, China.
\and
Yair Censor \at
Department of Mathematics, University of Haifa, Mt.\ Carmel, Haifa 3498838, Israel.
\and
Ming Jiang \at
LMAM, School of Mathematical Sciences, \& Beijing International Center for Mathematical Research,
Peking University, Beijing 100871, China.
}


\section{Introduction\label{sect:introduction}}

In this paper we consider convex minimization problems of the form
\begin{equation}
\left\{
\begin{array}
[c]{ll}%
\operatorname*{minimize} & \displaystyle J(x)\\
\text{subject to} & \displaystyle x\in\Omega.
\end{array}
\right.  \label{eq:convex-minimization}%
\end{equation}
The constraint set $\Omega\subseteq\mathbb{R}^{n}$ is assumed to be nonempty,
closed and convex, and the objective function $J:\Omega\mapsto\mathbb{R}$ is
convex. Many problems in engineering and technology can be modeled by
(\ref{eq:convex-minimization}). Gradient-type iterative methods are advocated
techniques for such problems and there exists an extensive literature
regarding projected gradient or subgradient methods as well as their
incremental variants, see, e.g.,
\cite{Bertsekas1999Nonlinear,HelouNeto2009Incremental,Kiwiel2004Convergence,Nesterov2004Introductory,Polyak1987Introduction}%
.

In particular, the weighted Least-Squares (LS) and the Kullback-Leibler (KL)
distance (also known as \textit{I}-divergence or cross-entropy
\cite{Csiszar1991Why}), which are two special instances of the Bregman
distances \cite[p. 33]{Censor1997Parallel}, are generally adopted as proximity
functions measuring the constraints-compatibility in the field of image
reconstruction from projections
\cite{Byrne2001Proximity,Byrne1993Iterative,Combettes1994Inconsistent,Jiang2001Development}%
. Minimization of the LS or the KL distance with additional constraints, such
as nonnegativity, naturally falls within the scope of
(\ref{eq:convex-minimization}). Correspondingly, the Landweber iteration
\cite{Landweber1951iteration} is a general gradient method for weighted LS
problems \cite[Section 6.2]{Bertero1998Introduction}, \cite[Section
4.6]{Cegielski2013Iterative}, \cite{Jiang2003Convergence},
\cite{McCormick1975uniform}, \cite{Piana1997Projected}, while the class of
expectation-maximization (EM) algorithms \cite{Shepp1982Maximum} are
essentially scaled gradient methods for the minimization of KL distance
\cite{Bertero2008Iterative,HelouNeto2005Convergence,Lanteri2001general}.

Motivated by the scaled gradient formulation of EM-type algorithms, we focus
our attention on the family of projected scaled gradient (PSG) methods, the
basic iterative step of which is given by
\begin{equation}
\label{eq:PSG-introduction}x^{k+1} := P_{\Omega}(x^{k} -\tau_{k} D(x^{k})
\nabla J(x^{k})),
\end{equation}
where $\tau_{k}$ denotes the stepsize, $D(x^{k})$ is a diagonal scaling matrix
and $P_{\Omega}$ is the orthogonal (Euclidean least distance) projection onto
$\Omega$. To our knowledge, the PSG methods presented here date back to
\cite[Eq. (29)]{Bertsekas1976Goldstein} and they resemble the projected Newton
method studied in \cite{Bertsekas1982Projected}.

From the algorithmic structural point of view, the family of PSG methods
includes, but is not limited to, the Goldstein-Levitin-Polyak gradient
projection method
\cite{Bertsekas1976Goldstein,Goldstein1964Convex,Levitin1966Constrained}, the
projected Newton method \cite{Bertsekas1982Projected}, and the projected
Landweber method \cite[Section 6.2]{Bertero1998Introduction},
\cite{Piana1997Projected}, as well as generalized EM-type methods
\cite{HelouNeto2005Convergence,Lanteri2001general}. The PSG methods should be
distinguished from the scaled gradient projection (SGP) methods in the
literature \cite{Bertero2008Iterative,Bonettini2009scaled}. PSG methods belong
to the class of two-metric projection methods \cite{Gafni1984Two}, which adopt
different norms for the computation of the descent direction and the
projection operation while SGP methods utilize the same norm for both.

The main purpose of this paper is to investigate the convergence behavior of
PSG methods and their bounded perturbation resilience. This is inspired by the
recently developed superiorization methodology (SM)
\cite{Censor2010Perturbation,Censor2013Projected,Herman2012Superiorization}.
The superiorization methodology works by taking an iterative algorithm,
investigating its perturbation resilience, and then, using proactively such
permitted perturbations, forcing\ the perturbed algorithm to do something
useful in addition to what it is originally designed to do. The original
unperturbed algorithm is called the \textquotedblleft Basic
Algorithm\textquotedblright\ and the perturbed algorithm is called the
\textquotedblleft Superiorized Version\ of the Basic
Algorithm\textquotedblright.

If the original algorithm\footnote{We use the term \textquotedblleft
algorithm\textquotedblright\ for the iterative processes discussed here, even
for those that do not include any termination criterion. This does not create
any ambiguity because whether we consider an infinite iterative process or an
algorithm with a termination rule is always clear from the context.} is
computationally efficient and useful in terms of the application at hand, and
if the perturbations are simple and not expensive to calculate, then the
advantage of this methodology is that, for essentially the same computational
cost of the original Basic Algorithm, we are able to get something more by
steering its iterates according to the perturbations.

This is a very general principle, which has been successfully used in some
important practical applications and awaits to be implemented and tested in
additional fields; see, e.g., the recent papers \cite{rand-conmath,sh14}, for
applications in intensity-modulated radiation therapy and in nondestructive
testing. The principles of superiorization and perturbation resilience along
with many references to works in which they were used, are reviewed in the
recent \cite{censor-weak-14} and \cite{gth-sup4IA}. A chronologically ordered
bibliography of scientific publications on the superiorization methodology and
perturbation resilience of algorithms has recently been compiled and is being
continuously updated by the second author. It is now available at: http://math.haifa.ac.il/yair/bib-superiorization-censor.html.

In a nutshell, the SM lies between feasibility-seeking and constrained
minimization. It is not quite trying to solve the full-fledged constrained
minimization; rather, the task is to seek a superior feasible solution in
terms of the given objective function. This can be beneficial for cases when
an exact approach to constrained minimization has not yet been discovered, or
when exact approaches are computer resources demanding or computation time
consuming. In such cases, existing feasibility-seeking algorithms that are
perturbation resilient can be turned into efficient algorithms that perform superiorization.

The basic idea of the SM originates from the discovery that some
feasibility-seeking projection algorithms for convex feasibility problems are
bounded perturbations resilient \cite{Butnariu2007Stable}. SM thus takes
advantage of the perturbation resilience property of the String-Averaging
Projections (SAP) \cite{Censor2012Convergence} or Block-Iterative Projections
(BIP) \cite{Davidi2009Perturbation,Nikazad2012Accelerated} methods to steer
the iterates of the original feasibility-seeking projection method towards a
reduced, but not necessarily minimal, value of the given objective function of
the constrained minimization problem at hand, see, e.g.,
\cite{Censor2010Perturbation,Penfold2010Total}.

The mathematical principles of the SM over general consistent
\textquotedblleft problem structures\textquotedblright\ with the notion of
bounded perturbation resilience were formulated in
\cite{Censor2010Perturbation}. The framework of the SM was extended to the
inconsistent case by using the notion of strong perturbation resilience
\cite{Herman2012Superiorization}. Most recently, the effectiveness of the SM
was demonstrated by a performance comparison with the projected subgradient
method for constrained minimization problems \cite{Censor2013Projected}.

But the SM is not limited to handling just feasibility-seeking algorithms. It
can take any \textquotedblleft Basic Algorithm\textquotedblright\ that is
bounded perturbations resilient and introduce certain permitted perturbations
into its iterates, such that the resulting algorithm is automatically steered
to produce an output that is superior with respect to the given objective
function. See Subsection \ref{subsect:BPR} below for more details on this point.

Specifically, efforts have been recently made to derive a superiorized version
of the EM algorithm, and this is why we study the bounded perturbation
resilience of the PSG methods here. Superiorization of the EM algorithm was
first reported experimentally in our previous work with application to
bioluminescence tomography \cite{Jin2013heuristic}. Such superiorized version
of the EM iteration was later applied to single photon emission computed
tomography \cite{Luo2013Superiorization}. The effectiveness of superiorization
of the EM algorithm was further validated with a study using statistical
hypothesis testing in the context of positron emission tomography
\cite{Garduno2013Superiorization}.

These efforts with regard to the EM algorithm prompted our research reported
here. Namely, the need to secure bounded perturbations resilience of the EM
algorithm that will justify the use of a superiorized version of it to seek
total variation (TV) reduced values of the image vector $x$\ in an image
reconstruction problem that employs an EM algorithm, see Section
\ref{sect:BPR-PSG} below.

The fact that the algebraic reconstruction technique (ART), see, e.g.,
\cite[Chapter 11]{Herman2009Fundamentals} and references therein, is related
to the Landweber iteration \cite{Jiang2003Convergence,Trussell1985Landweber}
for weighted LS problems and the fact that EM is essentially a scaled gradient
method for KL minimization
\cite{Bertero2008Iterative,HelouNeto2005Convergence,Lanteri2001general} prompt
us to investigate the PSG methods, which encompass both, with bounded perturbations.

So, in view of the above considerations, we ask if the convergence of PSG
methods will be preserved in the presence of bounded perturbations? In this
study, we provide an affirmative answer to this question. First we prove the
convergence of the iterates generated by
\begin{equation}
\label{eq:PSG-with-summable-perturbations-introduction}x^{k+1} := P_{\Omega
}(x^{k} -\tau_{k} D(x^{k}) \nabla J(x^{k})+ e(x^{k})),
\end{equation}
with $\{e(x^{k})\}_{k=0}^{\infty}$ denoting the sequence of outer
perturbations and satisfying
\begin{equation}
\label{eq:summable-perturbations-introduction}\sum_{k=0}^{\infty}\Vert
e(x^{k})\Vert<+\infty.
\end{equation}
This convergence result is then translated to the desired bounded perturbation
resilience of PSG methods (in Section \ref{sect:BPR-PSG} below).

The algorithmic structure of
(\ref{eq:PSG-with-summable-perturbations-introduction}%
)--(\ref{eq:summable-perturbations-introduction}) is adapted from the general
framework of the feasible descent methods studied in \cite{Luo1993Error}.
Compared with \cite{Luo1993Error}, our algorithmic extension has two aspects.
Firstly, the diagonally scaled gradient is incorporated, which allows to
include additional cases such as generalized EM-type methods. Secondly, the
perturbations in \cite{Luo1993Error} were given as
\begin{equation}
\label{eq:perturbation-by-luo}\Vert e(x^{k}) \Vert\leq\gamma\Vert x^{k} -
x^{k+1} \Vert\text{~~for~some~~} \gamma>0, \quad\forall k,
\end{equation}
so as not to deviate too much from gradient projection methods, while in our
case the perturbations are assumed to be just bounded.

Bounded perturbations as in (\ref{eq:summable-perturbations-introduction})
were previously studied in the context of inexact matrix splitting algorithms
for the symmetric monotone linear complementarity problem
\cite{Mangasarian1991Convergence}. This was further investigated in
\cite{Li1993Remarks} under milder assumptions by extending the proof of
\cite{Luo1992linear}. Additionally, convergence of the feasible descent method
with nonvanishing perturbations and its generalization to incremental
subgradient-type methods were also reported in \cite{Solodov1997Convergence}
and \cite{Solodov1998Error}, respectively.

The paper is organized as follows. In Section \ref{sect:PSG}, we introduce the
PSG methods by studying two particular cases of the proximity function
minimization problems for image reconstruction. In Section
\ref{sect:Convergence-PSG}, we present our main convergence results for the
PSG method with bounded perturbations, namely, the convergence of
(\ref{eq:PSG-with-summable-perturbations-introduction}%
)--(\ref{eq:summable-perturbations-introduction}). We call the latter
\textquotedblleft outer perturbations\textquotedblright\ because of the
location of the term $e(x^{k})$ in
(\ref{eq:PSG-with-summable-perturbations-introduction}). In Section
\ref{sect:BPR-PSG}, we prove the bounded perturbation resilience of the PSG
method by establishing a relationship between the inner perturbations and the
outer perturbations.


\section{Projected Scaled Gradient Methods\label{sect:PSG}}

In this section, we introduce the background and motivation of the projected
scaled gradient (PSG) methods for (\ref{eq:convex-minimization}). As mentioned
before, the PSG methods generate iterates according to the formula
\begin{equation}
\label{eq:PSG}x^{k+1} = P_{\Omega}(x^{k} -\tau_{k} D(x^{k}) \nabla
J(x^{k})),\quad k=0,1,2,\ldots
\end{equation}
where $\{\tau_{k}\}_{k=0}^{\infty}$ is a sequence of positive stepsizes and
$\{D(x^{k})\}_{k=0}^{\infty}$ is a sequence of diagonal scaling matrices. The
diagonal scaling matrices not only play the role of preconditioning the
gradient direction, but also induce a general algorithmic structure that
encompasses many existing algorithms as special cases.

In particular, the PSG methods include the gradient projection method
\cite{Bertsekas1976Goldstein,Goldstein1964Convex,Levitin1966Constrained},
which corresponds to the situation when $D(x^{k}) \equiv I_{n}$ for any $k$
with $I_{n}$ the identity matrix of order $n$. In case when $D(x^{k})
\approx\nabla^{2} J(x^{k})^{-1}$, namely when the diagonal scaling matrix is
an adequate approximation of the inverse Hessian, the PSG method reduces to
the projected Newton method \cite{Bertsekas1982Projected}. In fact, the
selection of various diagonal scaling matrices give rise to different concrete
algorithms. How to choose appropriate diagonal scaling matrices depends on the
particular problem.


We investigate the class of projected scaled gradient (PSG) methods by
concentrating on two particular cases of (\ref{eq:convex-minimization}).
Consider the following linear image reconstruction problem model with
nonnegativity constraint,
\begin{equation}
\label{eq:linear-problem}Ax=b,\text{ }x\geq0,
\end{equation}
where $A=(a_{j}^{i})_{i,j=1}^{m,n}$ is an $m\times n$ matrix in which
$a^{i}=(a_{j}^{i})_{j=1}^{n}\in\mathbb{R}^{n}$ is the $i$th column of its
transpose $A^{T}$, and $x=(x_{j})_{j=1}^{n}\in\mathbb{R}^{n}$ and
$b=(b_{i})_{i=1}^{m}\in\mathbb{R}^{m}$ are all assumed to be nonnegative. For
simplicity, we denote $\Omega_{0} := \mathbb{R}^{n}_{+} $ hereafter.

\subsection{Projected Landweber-type Methods}

The linear problem model (\ref{eq:linear-problem}) can be approached as the
following constrained weighted Least-Squares (LS) problem,
\begin{equation}
\label{eq:WLS-minimization}\left\{
\begin{array}
[c]{ll}%
\operatorname*{minimize} & \displaystyle J_{\mathrm{LS}}(x)\\
\text{subject to} & \displaystyle x \in\Omega_{0},
\end{array}
\right.
\end{equation}
where the weighted LS functional $J_{\mathrm{LS}}(x)$ is defined by
\begin{equation}
\label{eq:Weighted-LS}J_{\mathrm{LS}}(x):= \frac{1}{2}\left\|  b -Ax \right\|
_{W}^{2} = \frac{1}{2}\left\langle W(b -Ax) , b -Ax \right\rangle ,
\end{equation}
with $W$ the weighting matrix depending on the specific problem. The gradient
of $J_{\mathrm{LS}}(x)$ for any $x\in\mathbb{R}^{n}$ is
\begin{equation}
\label{eq:grad-WLS}\nabla J_{\mathrm{LS}}(x)= - A^{T} W (b - Ax).
\end{equation}

The projected Landweber method \cite[Section 6.2]{Bertero1998Introduction} for
(\ref{eq:WLS-minimization}) uses the iteration
\begin{equation}
\label{eq:projected-Landweber}x^{k+1} = P_{\Omega_{0}}(x^{k} + \tau_{k} A^{T}
W( b - A x^{k}) ).
\end{equation}
By (\ref{eq:grad-WLS}), the above (\ref{eq:projected-Landweber}) can be
written as
\begin{equation}
\label{eq:projected-Landweber-grad}x^{k+1} = P_{\Omega_{0}}(x^{k} - \tau_{k}
\nabla J_{\mathrm{LS}}(x^{k}) ),
\end{equation}
which obviously belongs to the family of PSG methods for
(\ref{eq:WLS-minimization}) with the diagonal scaling matrix $D(x^{k}) \equiv
I_{n}$ for any $k$.

The projected Landweber method with diagonal preconditioning for
(\ref{eq:WLS-minimization}), as studied in \cite{Piana1997Projected}, uses the
iteration
\begin{equation}
x^{k+1}=P_{\Omega_{0}}(x^{k}+\tau_{k}VA^{T}W(b-Ax^{k})),
\label{eq:preconditioned-projected-Landweber}%
\end{equation}
where $V$ is a diagonal $n\times n$ matrix satisfying certain conditions, see
\cite[p. 446, (i)-(iii)]{Piana1997Projected}. By (\ref{eq:grad-WLS}),
(\ref{eq:preconditioned-projected-Landweber}) is equivalent to the iteration
\begin{equation}
x^{k+1}=P_{\Omega_{0}}(x^{k}-\tau_{k}V\nabla J_{\mathrm{LS}}(x^{k})),
\label{eq:preconditioned-projected-Landweber-PSG}%
\end{equation}
and hence, it also belongs to the family of PSG methods with $D(x^{k})\equiv
V$ for any $k$.

In general, the projected Landweber-type methods for
(\ref{eq:WLS-minimization}) is given by
\begin{equation}
\label{eq:general-projected-Landweber}x^{k+1} = P_{\Omega_{0}}(x^{k} -
\tau_{k} D_{\mathrm{LS}} \nabla J_{\mathrm{LS}}(x^{k}) ),
\end{equation}
where the diagonal scaling matrices are typically constant positive definite
matrices of the form,
\begin{equation}
\label{eq:LS-scaling-matrix}D_{\mathrm{LS}}:=\operatorname*{diag}\left\{
\frac{1}{s_{j}} \right\}  , \quad s_{j} \in\mathbb{R} \text{~and~} s_{j} >0,
\text{~for all~} j=1,2,\ldots,n,
\end{equation}
with $s_{j}$ possibly constructed from the linear system matrix $A$ of
(\ref{eq:linear-problem}) for each $j$, and being sparsity pattern oriented
\cite[Eq. (2.2)]{Censor2008diagonally}.

\subsection{Generalized EM-type Methods}

The Kullback-Leibler distance is a widely adopted proximity function in the
field of image reconstruction. Using it, we seek a solution of
(\ref{eq:linear-problem}) by minimizing the Kullback-Leibler distance between
$b$ and $Ax$, as given by
\begin{equation}
\label{eq:jkl}J_{\mathrm{KL}}(x):= \mathrm{KL}(b,Ax) = \sum_{i=1}^{m}\left(
b_{i}\log\frac{b_{i}}{\left\langle a^{i},x\right\rangle }+\left\langle
a^{i},x\right\rangle -b_{i}\right)  ,
\end{equation}
over nonnegativity constraints, i.e.,%
\begin{equation}
\label{eq:KL-minimization}\left\{
\begin{array}
[c]{ll}%
\operatorname*{minimize} & \displaystyle J_{\mathrm{KL}}(x)\\
\text{subject to} & \displaystyle x \in\Omega_{0}.
\end{array}
\right.
\end{equation}
The gradient of $J_{\mathrm{KL}}(x)$ is
\begin{equation}
\label{eq:KL-gradient}\nabla J_{\mathrm{KL}}(x)=\sum_{i=1}^{m}\left(
1-\frac{b_{i}}{\left\langle a^{i},x\right\rangle }\right)  a^{i}.
\end{equation}

The class of EM-type algorithms is known to be closely related to KL
minimization. The $k$th iterative step of the EM algorithm in $\mathbb{R}^{n}$
is given by%
\begin{equation}
\label{eq:EM}x_{j}^{k+1}=\frac{x_{j}^{k}}{\sum_{i=1}^{m}a_{j}^{i}}\sum
_{i=1}^{m}\frac{b_{i}}{\left\langle a^{i},x^{k}\right\rangle }a_{j}^{i},\text{
for all }j=1,2,\ldots,n.
\end{equation}
The following convergence results of the EM algorithm are well-known. For any
positive initial point $x^{0}\in\mathbb{R}_{++}^{n}$, any sequence
$\{x^{k}\}_{k=0}^{\infty}$, generated by (\ref{eq:EM}), converges to a
solution of (\ref{eq:linear-problem}) in the consistent case, while it
converges to the minimizer of the Kullback-Leibler distance $\mathrm{KL}%
(b,Ax)$, defined by (\ref{eq:jkl}), in the inconsistent case
\cite{Iusem1991Convergence}.

It is known that the EM algorithm can be viewed as the following scaled
gradient method, see, e.g.,
\cite{Bertero2008Iterative,HelouNeto2005Convergence,Lanteri2001general}, whose
$k$th iterative step is
\begin{equation}
x^{k+1}=x^{k}-D_{\mathrm{EM}}(x^{k})\nabla J_{\mathrm{KL}}(x^{k}),
\label{equ:EM-scaled-gradient}%
\end{equation}
where the $n\times n$ diagonal scaling matrix is defined by
\begin{equation}
\label{eq:EM-scaling-matrix}D_{\mathrm{EM}}(x):=\operatorname*{diag}\left\{
\frac{x_{j}}{\sum_{i=1}^{m}a_{j}^{i}}\right\}  .
\end{equation}
Thus the EM algorithm belongs to the class of PSG methods with $\tau_{k}
\equiv1$ for all $k$ and the diagonal scaling matrix given by $D(x) \equiv
D_{\mathrm{EM}}(x)$ for any $x$.

More generally, generalized EM-type methods for (\ref{eq:KL-minimization}) can
be given by
\begin{equation}
\label{eq:generalized-EM}x^{k+1}=P_{\Omega_{0}}(x^{k}-\tau_{k}D_{\mathrm{KL}%
}(x^{k})\nabla J_{\mathrm{KL}}(x^{k})),
\end{equation}
with $\{\tau_{k}\}_{k=0}^{\infty}$ as relaxation parameters \cite[Section
5.1]{Censor1997Parallel} and $\{D_{\mathrm{KL}}(x^{k})\}_{k=0}^{\infty}$ as
diagonal scaling matrices. The diagonal scaling matrices for the generalized
EM-type methods are typically of the form, see, e.g.,
\cite{HelouNeto2005Convergence},
\begin{equation}
\label{eq:KL-scaling-matrix-general}D_{\mathrm{KL}}(x):=\operatorname*{diag}%
\left\{  \frac{x_{j}}{\hat{s}_{j}}\right\}  ,\quad\text{with~} \hat{s}_{j}
\in\mathbb{R} \text{~and~} \hat{s}_{j}>0 \text{~for~} j=1,2, \ldots,n,
\end{equation}
where $\hat{s}_{j}$ might be dependent on the linear system matrix $A$ of
(\ref{eq:linear-problem}) for any $j$. When $\hat{s}_{j} = \sum_{i=1}^{m}%
a_{j}^{i}$ for any $j$, then $D_{\mathrm{KL}}(x)$ coincides with the matrix
$D_{\mathrm{EM}}(x)$ given by (\ref{eq:EM-scaling-matrix}).

\bigskip

It is worthwhile to comment here that it is natural to obtain incremental
versions of PSG methods when the objective function $J(x)$ is separable, i.e.,
$J(x) = \sum_{i=1}^{m} J_{i}(x)$ for some integer $m$. The separability of
both the weighted LS functional (\ref{eq:Weighted-LS}) and the KL functional
(\ref{eq:jkl}) facilitates the derivation of incremental variants for the
projected Landweber-type methods and generalized EM-type methods. While the
incremental methods enjoy better convergence at early iterations, relaxation
strategies are required to guarantee asymptotic acceleration
\cite{HelouNeto2009Incremental}.

\section{Convergence of the PSG Method with Outer
Perturbations\label{sect:Convergence-PSG}}

In this section, we present our main convergence results of the PSG method
with bounded outer perturbations of the form
(\ref{eq:PSG-with-summable-perturbations-introduction}%
)--(\ref{eq:summable-perturbations-introduction}). The stationary points of
(\ref{eq:convex-minimization}) are fixed points of $P_{\Omega}(x-\nabla J(x))$
\cite[Corollary 1.3.5]{Cegielski2013Iterative}, i.e., zeros of the residual
function%
\begin{equation}
r(x):=x-P_{\Omega}(x-\nabla J(x)). \label{eq:residual-function}%
\end{equation}
We denote the set of all these stationary points by
\begin{equation}
S:=\left\{  x\in\mathbb{R}^{n}\mid r(x)=0\right\}  ,
\label{eq:stationary-points-set}%
\end{equation}
and assume that\ $S\neq\emptyset$. We also assume that (1) has a solution and
that $J^{\ast}:=\inf_{x\in\Omega}J(x).$ We will prove that sequences generated
by a PSG method converge to a stationary point of
(\ref{eq:convex-minimization}) in the presence of bounded perturbations.

We focus our attention on objective functions $J(x)$ of
(\ref{eq:convex-minimization}) that are assumed to belong to a subclass of
convex functions, in the notation of \cite[p. 65]{Nesterov2004Introductory},
$J \in\mathcal{S}_{\mu,L}^{1,1}(\Omega)$, which means that $\nabla J$ is
Lipschitz continuous on $\Omega$ with Lipschitz constant $L$, i.e., there
exists a $L>0$, such that%
\begin{equation}
\label{eq:Lipschitz-gradient}\Vert\nabla J(x)-\nabla J(y)\Vert\leq L\Vert
x-y\Vert,\quad\text{for all }x,y\in\Omega,
\end{equation}
and that $J$ is strongly convex on $\Omega$ with the strong convexity
parameter $\mu$ ($L \geq\mu$), i.e., there exists a $\mu>0$, such that%
\begin{equation}
\label{eq:stron-conv}J(y)\geq J(x)+\langle\nabla J(x),y-x\rangle+\frac{1}%
{2}\mu\Vert y-x\Vert^{2},\text{ \ for all }x,y\in\Omega.
\end{equation}
The convergence of gradient methods without perturbations for this subclass of
convex functions, $\mathcal{S}_{\mu,L}^{1,1}(\Omega)$, is well-established,
see \cite{Nesterov2004Introductory}.

Motivated by recent works on superiorization
\cite{Censor2010Perturbation,Censor2013Projected,Herman2012Superiorization}
and the framework of feasible descent methods \cite{Luo1993Error}, we
investigate convergence of the PSG method with bounded perturbations for
(\ref{eq:convex-minimization}), that is,
\begin{equation}
x^{k+1}=P_{\Omega}(x^{k}-\tau_{k}D(x^{k})\nabla J(x^{k})+e(x^{k})),
\label{eq:PSG-with-summable-perturbations}%
\end{equation}
where $\{\tau_{k}\}_{k=0}^{\infty}$ is a sequence of positive scalars with
\begin{equation}
\label{eq:stepsize-bound}0 < \inf_{k}\tau_{k} \leq\tau_{k} \leq\sup_{k}%
\tau_{k} < 2/L,
\end{equation}
and $\{D(x^{k})\}_{k=0}^{\infty}$ is a sequence of diagonal scaling matrices.
Denoting $e^{k}:=e(x^{k})$, the sequence of perturbations $\{e^{k}%
\}_{k=0}^{\infty}$ is assumed to be summable, i.e.,
\begin{equation}
\sum_{k=0}^{\infty}\Vert e^{k}\Vert<+\infty. \label{eq:summable-perturbations}%
\end{equation}
To ensure that the scaled gradient direction does not deviate too much from
the gradient direction, we define%
\begin{equation}
\theta^{k}:=\nabla J(x^{k})-D(x^{k})\nabla J(x^{k}), \label{eq:theta-k}%
\end{equation}
and assume that
\begin{equation}
\sum_{k=0}^{\infty}\Vert\theta^{k}\Vert<+\infty. \label{eq:grad-dev-summable}%
\end{equation}

\subsection{Preliminary Results}

In this subsection, we prepare some relevant facts and pertinent conditions
that are necessary for our convergence analysis. The following lemmas are
required by subsequent proofs. The first one is known as the descent lemma for
a function with Lipschitz continuous gradient, see \cite[Proposition
A.24]{Bertsekas1999Nonlinear}.

\begin{lemma}
\label{lemma:Lipschitz}Let $J:\mathbb{R}^{n}\rightarrow\mathbb{R}$ be a
continuously differentiable function whose gradients are Lipschitz continuous
with constant $L$. Then, for any $L^\prime\geq L$,
\begin{equation}
J(x)\leq J(y)+\langle\nabla J(y),x-y\rangle+\frac{L^\prime}{2}\Vert
x-y\Vert^{2},\quad\text{for all }x,y\in\mathbb{R}^{n}.
\end{equation}
\end{lemma}

The second lemma reveals well-known characterizations of projections onto
convex sets, see, e.g., \cite[Proposition 2.1.3]{Bertsekas1999Nonlinear} or
\cite[Fig. 11]{Polyak1987Introduction}.

\begin{lemma}\label{lemma:projection-onto-convex-sets}
Let $\Omega$ be a nonempty, closed and convex subset of $\mathbb{R}^n$. Then, the orthogonal projection onto $\Omega$ is characterized by
\begin{description}
\item[(i)]
For any $x \in \mathbb{R}^n$, the projection $P_{\Omega}(x)$ of $x$ onto $\Omega$ satisfies
\begin{equation}\label{eq:projection-onto-convex-sets}
\left \langle x - P_{\Omega}(x), y - P_{\Omega}(x)\right \rangle \leq 0, \quad \forall y \in \Omega.
\end{equation}
\item[(ii)]
$P_{\Omega}$ is a nonexpansive operator, i.e.,
\begin{equation}\label{eq:projection-nonexpansive}
\|P_{\Omega}(x) - P_{\Omega}(y)\| \leq \| x - y\|, \quad \forall x,y \in \mathbb{R}^n.
\end{equation}
\end{description}
\end{lemma}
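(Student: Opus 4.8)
The plan is to establish part~(i) directly from the defining minimization property of the projection, and then to obtain part~(ii) as a short consequence of part~(i). Throughout I take for granted that, because $\Omega$ is nonempty, closed and convex, the projection $P_{\Omega}(x)$ exists and is the unique minimizer of $z \mapsto \|x-z\|^2$ over $z \in \Omega$; this existence and uniqueness is precisely what justifies writing $P_{\Omega}(x)$ at all.

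For part~(i), fix $x \in \mathbb{R}^n$ and write $p := P_{\Omega}(x)$. Given any $y \in \Omega$, convexity of $\Omega$ guarantees that the segment point $(1-t)p + ty = p + t(y-p)$ lies in $\Omega$ for every $t \in [0,1]$. I would then examine the scalar function
\[
\phi(t) := \|x - p - t(y-p)\|^2 = \|x-p\|^2 - 2t\langle x-p, y-p\rangle + t^2\|y-p\|^2.
\]
Since $p$ minimizes the distance to $x$ over $\Omega$, the function $\phi$ attains its minimum over $[0,1]$ at $t=0$, so its one-sided derivative there is nonnegative: $\phi'(0) = -2\langle x-p, y-p\rangle \geq 0$. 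Dividing by $-2$ yields exactly $\langle x - p, y - p\rangle \leq 0$, which is the claim in~(i).

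For part~(ii), set $p := P_{\Omega}(x)$ and $q := P_{\Omega}(y)$. I would apply the inequality from~(i) twice: once at the point $x$ with the feasible test vector $q \in \Omega$, giving $\langle x-p, q-p\rangle \leq 0$, and once at the point $y$ with the feasible test vector $p \in \Omega$, giving $\langle y-q, p-q\rangle \leq 0$. Adding these two inequalities and regrouping the terms around the common vector $p-q$ leads to $\|p-q\|^2 \leq \langle x-y, p-q\rangle$. Applying the Cauchy--Schwarz inequality to the right-hand side then gives $\|p-q\|^2 \leq \|x-y\|\,\|p-q\|$, and dividing by $\|p-q\|$ (the case $p=q$ being trivial) produces $\|p-q\| \leq \|x-y\|$, which is~(ii).

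Since this is a classical result, I do not anticipate a genuine obstacle; the only point demanding care is the derivative argument in~(i), where one must use the one-sided (right) derivative at the boundary point $t=0$ of $[0,1]$ rather than a two-sided stationarity condition, since $t=0$ is an endpoint of the admissible interval. Equivalently, one can avoid calculus entirely and argue algebraically: from $\phi(t) \geq \phi(0)$ for all $t \in (0,1]$ one obtains $-2\langle x-p, y-p\rangle + t\|y-p\|^2 \geq 0$, and letting $t \downarrow 0$ recovers the same inequality.
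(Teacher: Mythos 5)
Your proof is correct and complete: the variational argument for part (i), including the care you take with the one-sided condition at the endpoint $t=0$ (or the equivalent algebraic limit $t \downarrow 0$), and the derivation of part (ii) by applying (i) twice, summing, and invoking Cauchy--Schwarz — which in passing establishes the stronger firm-nonexpansiveness inequality $\Vert P_{\Omega}(x)-P_{\Omega}(y)\Vert^{2} \leq \langle x-y,\, P_{\Omega}(x)-P_{\Omega}(y)\rangle$ — are exactly the classical arguments. The paper itself states this lemma without proof, citing \cite[Proposition 2.1.3]{Bertsekas1999Nonlinear} and \cite[Fig. 11]{Polyak1987Introduction}, and your argument coincides with the standard proof given in those references, so there is nothing further to reconcile.
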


The third lemma is a property of the orthogonal projection operator, which was
proposed in \cite[Lemma 1]{Gafni1984Two}, see also \cite[Lemma 2.3.1]%
{Bertsekas1999Nonlinear}.

\begin{lemma}\label{lemma:geometry-projection}
Let $\Omega$ be a nonempty, closed and convex subset of $\mathbb{R}^n$. Given $x\in\mathbb{R}^{n}$ and $d\in
\mathbb{R}^{n}$, the function $\varphi(t)$ defined by
\begin{equation}
\varphi(t):=\frac{\left\Vert P_{\Omega}(x+td)-x\right\Vert }{t}%
\end{equation}
is monotonically nonincreasing for $t>0$.
\end{lemma}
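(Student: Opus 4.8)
The plan is to fix $0 < t_1 \le t_2$, abbreviate $p_i := P_{\Omega}(x + t_i d)$ and $u_i := p_i - x$ for $i = 1,2$, and prove $\varphi(t_2) \le \varphi(t_1)$, i.e. $t_1\|u_2\| \le t_2\|u_1\|$. The only tool I would invoke is the variational characterization of the projection in Lemma~\ref{lemma:projection-onto-convex-sets}(i). Applying it at the point $x + t_1 d$ with the feasible test point $y = p_2 \in \Omega$, and at $x + t_2 d$ with $y = p_1 \in \Omega$, and using $x + t_i d - p_i = t_i d - u_i$ together with $p_2 - p_1 = u_2 - u_1$, produces the pair of inequalities
\begin{equation}
t_1\langle d,\, u_2 - u_1\rangle \le \langle u_1,\, u_2 - u_1\rangle, \qquad t_2\langle d,\, u_2 - u_1\rangle \ge \langle u_2,\, u_2 - u_1\rangle.
\end{equation}

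The key step is to eliminate the unknown scalar $\langle d, u_2 - u_1\rangle$, whose sign is not available a priori. Multiplying the first inequality by $t_2 > 0$ and the second by $t_1 > 0$ and chaining them through the common middle term $t_1 t_2\langle d, u_2 - u_1\rangle$ gives $t_1\langle u_2, u_2 - u_1\rangle \le t_2\langle u_1, u_2 - u_1\rangle$, which upon expanding and regrouping becomes
\begin{equation}
t_1\|u_2\|^2 + t_2\|u_1\|^2 \le (t_1 + t_2)\langle u_1,\, u_2\rangle.
\end{equation}

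Next I would apply Cauchy--Schwarz, $\langle u_1, u_2\rangle \le \|u_1\|\,\|u_2\|$, to the right-hand side and, writing $a := \|u_1\|$ and $b := \|u_2\|$, rearrange the resulting quadratic form into the factored inequality $(a - b)(t_1 b - t_2 a) \ge 0$. The desired conclusion $t_1 b \le t_2 a$ then follows by a short contradiction argument: if $t_1 b > t_2 a$, then since $t_2 \ge t_1 > 0$ and $a \ge 0$ one has $t_1 b > t_2 a \ge t_1 a$, hence $b > a$; but then both factors are strictly positive, contradicting the factored inequality. Therefore $t_1\|u_2\| \le t_2\|u_1\|$, which is precisely $\varphi(t_2) \le \varphi(t_1)$.

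I do not anticipate a genuine obstacle, as the whole argument is elementary once the two projection inequalities are in place. The only delicate point is the combination step: one must multiply the two inequalities by the \emph{opposite} stepsizes, $t_2$ and $t_1$, so that the term $\langle d, u_2 - u_1\rangle$ cancels exactly. The final factoring-plus-contradiction is routine, and the degenerate case $u_1 = 0$ needs no separate treatment, since the factored inequality then reads $-t_1 b^2 \ge 0$ and forces $b = 0$, so that $\varphi(t_1) = \varphi(t_2) = 0$.
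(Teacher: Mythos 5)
Your proof is correct: both applications of Lemma~\ref{lemma:projection-onto-convex-sets}(i) are instantiated properly, the cross-multiplication by $t_2$ and $t_1$ does eliminate $\langle d, u_2-u_1\rangle$ exactly, and the factored inequality $(a-b)(t_1 b - t_2 a)\geq 0$ together with your short contradiction argument yields $t_1\|u_2\|\leq t_2\|u_1\|$, which is the claim. Note, however, that the paper itself offers no proof of this lemma --- it is stated with citations to \cite[Lemma 1]{Gafni1984Two} and \cite[Lemma 2.3.1]{Bertsekas1999Nonlinear} --- so there is no in-paper argument to compare against; your derivation is a sound, self-contained reconstruction of the classical two-variational-inequality argument found in those references, and it correctly uses only tools already available in the paper (Lemma~\ref{lemma:projection-onto-convex-sets}(i) and Cauchy--Schwarz).
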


The fourth lemma is from \cite[Lemma 2.2]{Mangasarian1991Convergence}, which originates from \cite[Lemma 2.1]{Cheng1984gradient}, see also \cite[Lemma 3.1]{Combettes2001Quasi} or \cite[p. 44, Lemma 2]{Polyak1987Introduction} for a more general formulation.
\begin{lemma}\label{lemma:Cheng}
Let $\{\alpha_{k}\}_{k=0}^{\infty} \subset \mathbb{R}_+$ be a sequence of nonnegative real numbers. If it holds that $0 \leq \alpha_{k+1} \leq \alpha_k + \varepsilon_{k}$ for all $k \geq 0$, where $\varepsilon_{k}\geq0$ for all $k\geq0$ and $\sum_{k=0}^{\infty}\varepsilon_{k}<+\infty$,
then the sequence $\{\alpha_{k}\}_{k=0}^{\infty}$ converges.
\end{lemma}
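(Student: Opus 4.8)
The plan is to reduce the statement to the monotone convergence theorem for real sequences by subtracting off the accumulated perturbations. Concretely, I would set $S_{k}:=\sum_{j=0}^{k-1}\varepsilon_{j}$ (with the convention $S_{0}:=0$), so that $\{S_{k}\}_{k=0}^{\infty}$ is nondecreasing and, by the summability hypothesis $\sum_{j=0}^{\infty}\varepsilon_{j}<+\infty$, converges to a finite limit $S_{\infty}:=\sum_{j=0}^{\infty}\varepsilon_{j}$. I would then introduce the auxiliary sequence $\beta_{k}:=\alpha_{k}-S_{k}$, which is designed precisely so that the perturbation term cancels in the recurrence.

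First I would verify that $\{\beta_{k}\}_{k=0}^{\infty}$ is monotonically nonincreasing. Using $S_{k+1}=S_{k}+\varepsilon_{k}$ together with the hypothesis $\alpha_{k+1}\leq\alpha_{k}+\varepsilon_{k}$, a one-line computation yields $\beta_{k+1}=\alpha_{k+1}-S_{k}-\varepsilon_{k}\leq\alpha_{k}-S_{k}=\beta_{k}$. Next I would check that $\{\beta_{k}\}_{k=0}^{\infty}$ is bounded below: since $\alpha_{k}\geq0$ and $S_{k}\leq S_{\infty}$ for every $k$, we have $\beta_{k}\geq-S_{\infty}>-\infty$. A nonincreasing sequence that is bounded below converges, so $\beta_{k}\to\beta^{\ast}$ for some $\beta^{\ast}\in\mathbb{R}$.

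Finally I would recover the convergence of the original sequence by writing $\alpha_{k}=\beta_{k}+S_{k}$ and passing to the limit: since $\beta_{k}\to\beta^{\ast}$ and $S_{k}\to S_{\infty}$, it follows that $\alpha_{k}\to\beta^{\ast}+S_{\infty}$, which is the desired conclusion.

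There is no serious obstacle in this argument; the only point requiring minor care is the lower bound on $\beta_{k}$, where one must invoke \emph{both} the nonnegativity of the $\alpha_{k}$ and the finiteness of $S_{\infty}$, since dropping either hypothesis breaks the monotone-convergence step. An alternative route avoids the auxiliary sequence entirely: iterating the recurrence gives $\alpha_{m}\leq\alpha_{k}+\sum_{j=k}^{m-1}\varepsilon_{j}$ for $m>k$, which shows that $\{\alpha_{k}\}$ is bounded and, upon letting the tail $\sum_{j\geq k}\varepsilon_{j}\to0$, forces $\limsup_{k}\alpha_{k}\leq\liminf_{k}\alpha_{k}$ and hence convergence; but the monotone-sequence formulation above is cleaner and is the one I would write out in full.
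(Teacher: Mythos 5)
Your proof is correct. Note that the paper does not actually prove Lemma \ref{lemma:Cheng}; it imports the result from the cited literature (\cite[Lemma 2.2]{Mangasarian1991Convergence}, originating in \cite[Lemma 2.1]{Cheng1984gradient}; see also \cite[Lemma 3.1]{Combettes2001Quasi}), and your argument --- subtracting the partial sums $S_{k}$ so that $\beta_{k}:=\alpha_{k}-S_{k}$ is nonincreasing and bounded below by $-S_{\infty}$, then recovering $\alpha_{k}=\beta_{k}+S_{k}$ --- is precisely the standard proof found in those sources, with the two hypotheses (nonnegativity of $\alpha_{k}$ and summability of $\varepsilon_{k}$) invoked exactly where they are needed.
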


In our analysis we make use of the following two conditions, which are
Assumptions A and B, respectively, in \cite{Luo1993Error}, and are called
\textquotedblleft local error bound\textquotedblright\ condition and
\textquotedblleft proper separation of isocost surfaces\textquotedblright%
~condition, respectively. The error bound condition estimates the distance of
an $x\in\Omega$ to the solution set $S$, defined above, by the norm of the
residual function, see \cite{Pang1997Error} for a comprehensive review. Denote
the distance from a point $x$ to the set $S$ by $d(x,S)=\min_{y\in S}\Vert
x-y\Vert$.

\begin{condition}
\label{assumption:error-bound} For every $v\geq\inf_{x\in\Omega}J(x)$, there
exist scalars $\varepsilon>0$ and $\beta>0$ such that
\begin{equation}
\label{eq:local-error-bound}d(x,S)\leq\beta\Vert r(x)\Vert
\end{equation}
for all $x\in\Omega$ with $J(x)\leq v$ and $\Vert r(x)\Vert\leq\varepsilon$.
\end{condition}

The second condition, which says that the isocost surfaces of the function
$J(x)$ on the solution set $S$ should be properly separated, is known to hold
for any convex function \cite[p. 161]{Luo1993Error}.

\begin{condition}
\label{assumption:proper-separation} There exists a scalar $\varepsilon>0$
such that
\begin{equation}
\text{if }u,v\in S\text{ and }J(u)\not =J(v)\text{ then }\Vert u-v\Vert
\geq\varepsilon.
\end{equation}

\end{condition}

Next, we show that the above two conditions are satisfied by functions
belonging to $\mathcal{S}_{\mu,L}^{1,1}(\Omega)$. Since Condition
\ref{assumption:proper-separation} certainly holds for a strongly convex
function, we need to prove that Condition \ref{assumption:error-bound} is also
fulfilled. The early roots of the proof of the next lemma, which leads to this fact, can be traced back to Theorem 3.1 of \cite{Pang1987posteriori}.

\begin{lemma}\label{lemma:strongly-convex-error-bound-hold}
The error bound condition (\ref{eq:local-error-bound}) holds globally for any $J \in \mathcal{S}_{\mu,L}^{1,1}(\Omega)$.
\end{lemma}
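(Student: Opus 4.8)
The plan is to exploit that strong convexity renders the stationary-point set a singleton, and then to produce a \emph{global} linear bound of $\|x-x^{\ast}\|$ by $\|r(x)\|$ with an explicit constant, so that Condition \ref{assumption:error-bound} holds for every level $v$ with one and the same $\beta$ (and any $\varepsilon$). First I would record two consequences of $J\in\mathcal{S}_{\mu,L}^{1,1}(\Omega)$. Since a strongly convex function has a unique minimizer and, for the convex problem (\ref{eq:convex-minimization}), stationary points coincide with minimizers, we have $S=\{x^{\ast}\}$ and hence $d(x,S)=\|x-x^{\ast}\|$. Moreover, adding (\ref{eq:stron-conv}) to its copy with the roles of $x,y$ interchanged yields strong monotonicity of the gradient,
\begin{equation}
\langle\nabla J(x)-\nabla J(y),\,x-y\rangle\geq\mu\|x-y\|^{2},\quad\text{for all }x,y\in\Omega,
\end{equation}
while (\ref{eq:Lipschitz-gradient}) gives $\|\nabla J(x)-\nabla J(x^{\ast})\|\leq L\|x-x^{\ast}\|$.

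Next I would set $y:=P_{\Omega}(x-\nabla J(x))=x-r(x)$ and write two instances of the projection characterization in Lemma \ref{lemma:projection-onto-convex-sets}(i): one for the projection defining $y$, tested at the point $x^{\ast}\in\Omega$, and one expressing that $x^{\ast}=P_{\Omega}(x^{\ast}-\nabla J(x^{\ast}))$ is a fixed point, tested at $y\in\Omega$. The crucial step is to \emph{add} these two inequalities: this cancels the uncontrolled boundary term $\nabla J(x^{\ast})$, which need not vanish when $x^{\ast}\in\partial\Omega$, and leaves, with the abbreviation $u:=x-x^{\ast}$,
\begin{equation}
\|r(x)\|^{2}-\langle r(x),u\rangle-\langle\nabla J(x)-\nabla J(x^{\ast}),\,r(x)\rangle+\langle\nabla J(x)-\nabla J(x^{\ast}),\,u\rangle\leq0.
\end{equation}

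Then I would invoke strong monotonicity on the last inner product and drop the nonnegative term $\|r(x)\|^{2}$, obtaining $\mu\|u\|^{2}\leq\langle r(x),u\rangle+\langle\nabla J(x)-\nabla J(x^{\ast}),r(x)\rangle$. Bounding the right-hand side by the Cauchy--Schwarz inequality together with the Lipschitz estimate gives $\mu\|u\|^{2}\leq(1+L)\|u\|\,\|r(x)\|$, whence, after dividing by $\|u\|$ (the case $u=0$ being trivial),
\begin{equation}
\|x-x^{\ast}\|\leq\frac{1+L}{\mu}\,\|r(x)\|.
\end{equation}
This establishes (\ref{eq:local-error-bound}) with $\beta=(1+L)/\mu$ for \emph{all} $x\in\Omega$, independently of $v$ and $\varepsilon$, which is precisely the asserted global error bound.

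I expect the only genuine obstacle to be the recognition that the two projection/variational inequalities must be combined additively rather than used separately: it is exactly this combination that eliminates the term $\langle\nabla J(x^{\ast}),\cdot\rangle$, which otherwise scales like a fixed constant times $\|u\|$ and would prevent a clean quadratic-in-$\|u\|$ estimate. Everything after the cancellation is routine monotonicity and Lipschitz bookkeeping, and the uniqueness of the minimizer removes any need to estimate a projection of $x$ onto a nontrivial solution set.
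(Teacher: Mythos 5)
Your proposal is correct and follows essentially the same route as the paper's proof: both add the projection characterization of $P_{\Omega}(x-\nabla J(x))$ tested at $x^{\ast}$ to the variational (optimality) inequality of $x^{\ast}$ tested at $x-r(x)$, so that only the controllable difference $\nabla J(x)-\nabla J(x^{\ast})$ remains, and then apply strong monotonicity, Cauchy--Schwarz and the Lipschitz bound to obtain $\|x-x^{\ast}\|\leq\frac{L+1}{\mu}\|r(x)\|$, the identical constant $\beta=(L+1)/\mu$. Your observations that $S$ is a singleton and that the optimality condition can be read off from the fixed-point characterization of $x^{\ast}$ are only cosmetic variations on the paper's argument.
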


\begin{proof}
By the definition of the residual function (\ref{eq:residual-function}), we have
\begin{equation}\label{eq:x-plus-rx}
x - r(x) = P_{\Omega}(x - \nabla J(x)) \in \Omega.
\end{equation}
For any given $x^*\in S$, by the optimality condition of the problem (\ref{eq:convex-minimization}), see, e.g., \cite[p. 203, Theorem 3]{Polyak1987Introduction} or \cite[Proposition 2.1.2]{Bertsekas1999Nonlinear}, we know that
\begin{equation}\label{eq:optimility-condition}
\langle \nabla J(x^*), x - x^* \rangle \geq 0, \quad \forall x \in \Omega.
\end{equation}
Since $x - r(x) \in \Omega$ for all $x \in \Omega$, then, by (\ref{eq:optimility-condition}), we obtain,
\begin{equation}\label{equ:optimality-10}
\langle - \nabla J(x^*), x - r(x) - x^* \rangle \leq 0.
\end{equation}
From Lemma \ref{lemma:projection-onto-convex-sets} (i) and (\ref{eq:x-plus-rx}), we get
\begin{align}
&\quad~ \left\langle (x - \nabla J(x)) - P_{\Omega}(x - \nabla J(x)),  x^* - P_{\Omega}(x - \nabla J(x)) \right\rangle \leq 0 \nonumber\\
&\Rightarrow\langle (x - \nabla J(x)) - (x-r(x)), x^* - (x-r(x)) \rangle \leq 0 \nonumber\\
&\Rightarrow\langle \nabla J(x) - r(x), x-r(x) - x^* \rangle \leq 0 \nonumber\\
&\Rightarrow\langle \nabla J(x), x-r(x) - x^* \rangle \leq \langle r(x), x-r(x) - x^* \rangle.
\label{equ:convexity-10}
\end{align}
Summing up both sides of (\ref{equ:optimality-10}) and (\ref{equ:convexity-10}), yields
\begin{align}\label{eq:gradjx-plus-gradjxstar-upper-bound}
&\quad~ \left\langle \nabla J(x) - \nabla J(x^*), x - r(x) -x^* \right\rangle \leq \langle r(x), x-r(x)-x^* \rangle \nonumber\\
&\Rightarrow\langle \nabla J(x) - \nabla J(x^*), x - x^* \rangle \leq
\langle r(x), \nabla J(x) - \nabla J(x^*) + x - x^* \rangle.
\end{align}
By the strong convexity of $J(x)$, we have that \cite[Theorem 2.1.9 ]{Nesterov2004Introductory},
\begin{equation}\label{eq:strongly-convex-property}
\langle \nabla J(x) - \nabla J(x^*), x - x^* \rangle \geq \mu\|x - x^*\|^2.
\end{equation}
Combing (\ref{eq:gradjx-plus-gradjxstar-upper-bound}) with (\ref{eq:strongly-convex-property}), leads to
\begin{align}
\mu \|x -x^* \|^2 &\leq \langle r(x),\nabla J(x) -\nabla J(x^*) +x -x^* \rangle \nonumber\\
&\leq (\|\nabla J(x) - \nabla J(x^*) \|+\|x- x^* \|) \|r(x) \| \nonumber\\
&\leq (L+1)\|x -x^* \| \|r(x) \| \nonumber\\
\Rightarrow~   \|x -x^*\| &\leq (L+1)/\mu ~\|r(x) \|,
\end{align}
and, hence,
\begin{equation}
d(x, S) \leq (L+1)/\mu ~\|r(x) \|.
\end{equation}
Consequently, the error bound condition (\ref{eq:local-error-bound}), namely Condition \ref{assumption:error-bound} holds.
\end{proof}

\subsection{Convergence Analysis}

In this subsection, we give the detailed convergence analysis for the PSG
method with bounded outer perturbations of
(\ref{eq:PSG-with-summable-perturbations}). The proof techniques follow the
track of
\cite{Li1993Remarks,Luo1992linear,Luo1993Error,Mangasarian1991Convergence} and
extend them to adapt to our case here. We first prove the convergence of the
sequence of objective function values $\{J(x^{k})\}_{k=0}^{\infty}$ at points
of any sequence $\{x^{k}\}_{k=0}^{\infty}$ generated by the PSG method with
bounded outer perturbations of (\ref{eq:PSG-with-summable-perturbations}). We
then prove that any sequence of points $\{x^{k}\}_{k=0}^{\infty}$, generated
by the PSG method with bounded outer perturbations of
(\ref{eq:PSG-with-summable-perturbations}), converges to a stationary point.

The following proposition estimates the difference of objective function
values between successive iterations in the presence of bounded perturbations.

\begin{proposition}
\label{proposition:descent-condition-bound}Let $\Omega\subseteq\mathbb{R}^{n}$ be a
nonempty closed convex set and assume that $J(x)$ is strongly convex on $\Omega$
with convexity parameter $\mu,$ and that $\nabla J$ is Lipschitz continuous on
$\Omega$ with Lipschitz constant $L$ such that $L\geq\mu$. Further, let
$\{\tau_{k}\}_{k=0}^{\infty}$ be a sequence of positive scalars that fulfills
(\ref{eq:stepsize-bound}), let $\{e^{k}\}_{k=0}^{\infty}$ be a sequence of
perturbation vectors as defined above that fulfills
(\ref{eq:summable-perturbations}), and let $\{\theta^{k}\}_{k=0}%
^{\infty}$ be as in (\ref{eq:theta-k}) and for which
(\ref{eq:grad-dev-summable}) holds. If $\{x^{k}%
\}_{k=0}^{\infty}$ is any sequence, generated by the PSG method with bounded outer perturbations of (\ref{eq:PSG-with-summable-perturbations}), then there exists an $\eta_{1}>0$
such that%
\begin{equation}
J(x^{k})-J(x^{k+1})\geq\eta_{1}\Vert x^{k}-x^{k+1}\Vert^{2}-\Vert\delta
^{k}\Vert\Vert x^{k}-x^{k+1}\Vert\label{equ:descent-condition-bound}%
\end{equation}
with $\delta^{k}$ defined via the above-mentioned $\tau_{k},$ $e^{k}$ and $\theta^{k}$, by
\begin{equation}\label{eq:error-sum-delta}%
\delta^{k}:=\frac{1}{\tau_{k}}e^{k}+\theta^{k}.
\end{equation}
\end{proposition}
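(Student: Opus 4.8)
The plan is to establish the descent inequality~(\ref{equ:descent-condition-bound}) by combining the descent lemma (Lemma~\ref{lemma:Lipschitz}) with the variational characterization of the projection (Lemma~\ref{lemma:projection-onto-convex-sets}(i)), after first rewriting the perturbed iteration~(\ref{eq:PSG-with-summable-perturbations}) so that the error vector $\delta^{k}$ of~(\ref{eq:error-sum-delta}) appears naturally. The key observation is that
\begin{equation}
x^{k}-\tau_{k}D(x^{k})\nabla J(x^{k})+e^{k}
=x^{k}-\tau_{k}\nabla J(x^{k})+\tau_{k}\theta^{k}+e^{k}
=x^{k}-\tau_{k}\bigl(\nabla J(x^{k})-\delta^{k}\bigr),
\end{equation}
using~(\ref{eq:theta-k}) and $\delta^{k}=\tau_{k}^{-1}e^{k}+\theta^{k}$. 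Thus the perturbed PSG step is exactly an \emph{unperturbed} gradient-projection step for the modified direction $\nabla J(x^{k})-\delta^{k}$, and I can apply the projection inequality with this clean structure.

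First I would write $x^{k+1}=P_{\Omega}\bigl(x^{k}-\tau_{k}(\nabla J(x^{k})-\delta^{k})\bigr)$ and invoke Lemma~\ref{lemma:projection-onto-convex-sets}(i) with the test point $y=x^{k}\in\Omega$, which yields
\begin{equation}
\bigl\langle x^{k}-\tau_{k}(\nabla J(x^{k})-\delta^{k})-x^{k+1},\,x^{k}-x^{k+1}\bigr\rangle\leq 0 .
\end{equation}
Rearranging this isolates $\langle\nabla J(x^{k}),x^{k}-x^{k+1}\rangle$ and produces a lower bound of the form $\tau_{k}^{-1}\Vert x^{k}-x^{k+1}\Vert^{2}$ together with a cross term $\langle\delta^{k},x^{k}-x^{k+1}\rangle$ controlled by $\Vert\delta^{k}\Vert\,\Vert x^{k}-x^{k+1}\Vert$ via Cauchy--Schwarz. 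Next I would apply Lemma~\ref{lemma:Lipschitz} (the descent lemma) with $x=x^{k+1}$, $y=x^{k}$ and $L'=L$ to get $J(x^{k})-J(x^{k+1})\geq\langle\nabla J(x^{k}),x^{k}-x^{k+1}\rangle-\tfrac{L}{2}\Vert x^{k}-x^{k+1}\Vert^{2}$, and then substitute the projection bound for the inner-product term.

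Combining the two inequalities gives
\begin{equation}
J(x^{k})-J(x^{k+1})\geq\Bigl(\tfrac{1}{\tau_{k}}-\tfrac{L}{2}\Bigr)\Vert x^{k}-x^{k+1}\Vert^{2}-\Vert\delta^{k}\Vert\,\Vert x^{k}-x^{k+1}\Vert,
\end{equation}
so the desired constant is $\eta_{1}=\inf_{k}\bigl(\tau_{k}^{-1}-L/2\bigr)$. The stepsize hypothesis~(\ref{eq:stepsize-bound}), namely $\sup_{k}\tau_{k}<2/L$, is exactly what forces $\tau_{k}^{-1}>L/2$ uniformly, guaranteeing $\eta_{1}>0$; this is the one place the bound $2/L$ is used, and verifying it delivers a strictly positive, $k$-independent constant is the crux of the argument. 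I expect the only real subtlety to be bookkeeping: ensuring the cross term is grouped so that $\delta^{k}$ appears precisely as defined in~(\ref{eq:error-sum-delta}) rather than with stray $\tau_{k}$ factors, and confirming that $\inf_{k}(\tau_{k}^{-1}-L/2)$ is attained as a positive number rather than merely nonnegative. Note that neither the strong-convexity parameter $\mu$ nor the summability assumptions~(\ref{eq:summable-perturbations}) and~(\ref{eq:grad-dev-summable}) are needed for this single-step estimate; they enter only in the subsequent convergence proof, where~(\ref{equ:descent-condition-bound}) will be summed and Lemma~\ref{lemma:Cheng} applied.
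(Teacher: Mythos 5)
Your proposal is correct and follows essentially the same route as the paper's proof: the projection characterization of Lemma~\ref{lemma:projection-onto-convex-sets}(i) with test point $x^{k}\in\Omega$, the descent lemma (Lemma~\ref{lemma:Lipschitz}), Cauchy--Schwarz on the cross term, and the same constant, since your $\eta_{1}=\inf_{k}(\tau_{k}^{-1}-L/2)=1/\sup_{k}\tau_{k}-L/2$ coincides with the paper's choice and is strictly positive by~(\ref{eq:stepsize-bound}). Your preliminary rewriting of the perturbed step as $x^{k+1}=P_{\Omega}\bigl(x^{k}-\tau_{k}(\nabla J(x^{k})-\delta^{k})\bigr)$ is only a cosmetic repackaging of the paper's regrouping of the $e^{k}$ and $\theta^{k}$ terms, and your closing observation that $\mu$ and the summability hypotheses are not needed for this single-step estimate is also accurate.
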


\begin{proof}
Lemma \ref{lemma:Lipschitz} implies that
\begin{equation}
J(x^{k})-J(x^{k+1})\geq\langle\nabla J(x^{k}),x^{k}-x^{k+1}\rangle-\frac{L}%
{2}\Vert x^{k}-x^{k+1}\Vert^{2}. \label{equ:descent-by-Lipschitz}%
\end{equation}
By (\ref{eq:PSG-with-summable-perturbations}) and Lemma \ref{lemma:projection-onto-convex-sets}, we have%
\begin{equation}
\langle x^{k+1}-x^{k},x^{k}-\tau_{k}D(x^{k})\nabla J(x^{k})+e^{k}%
-x^{k+1}\rangle\geq0.
\end{equation}
Rearrangement of the last relation and using (\ref{eq:theta-k}) leads to%
\begin{align}
\langle\nabla J(x^{k}),x^{k}-x^{k+1}\rangle
&\geq \frac{1}{\tau_{k}}\Vert
x^{k}-x^{k+1}\Vert^{2}+\frac{1}{\tau_{k}}\langle e^{k},x^{k}-x^{k+1}%
\rangle\nonumber\\
&\quad~  +\langle\theta^{k},x^{k}-x^{k+1}\rangle.
\end{align}
By (\ref{eq:error-sum-delta}) and the Cauchy-Schwarz inequality we then
obtain
\begin{equation}
\langle\nabla J(x^{k}),x^{k}-x^{k+1}\rangle\geq\frac{1}{\tau_{k}}\Vert
x^{k}-x^{k+1}\Vert^{2}-\Vert\delta^{k}\Vert\Vert x^{k}-x^{k+1}\Vert.
\label{equ:descent-estimation-lowerbound}%
\end{equation}
Combining (\ref{equ:descent-estimation-lowerbound}) with
(\ref{equ:descent-by-Lipschitz}) leads to%
\begin{equation}
J(x^{k})-J(x^{k+1})\geq(\frac{1}{\tau_{k}}-\frac{L}{2})\Vert x^{k}%
-x^{k+1}\Vert^{2}-\Vert\delta^{k}\Vert\Vert x^{k}-x^{k+1}\Vert.
\end{equation}
By defining $\displaystyle \overline{\tau}:=\sup_{k}\tau_{k}$ and
\begin{equation}
\eta_{1}:=\frac{1}{\overline{\tau}}-\frac{L}{2}, \label{equ:eta2}%
\end{equation}
the proof is complete.
\end{proof}

From Proposition \ref{proposition:descent-condition-bound} and Lemma
\ref{lemma:Cheng}, we obtain the following theorem on the convergence of
objective function values.

\begin{theorem}
\label{theorem:f-value-converge}If the problem
(\ref{eq:convex-minimization}) has a solution, namely
$J^{\ast}=\inf_{x\in \Omega}J(x)$,
then under the conditions of Proposition
\ref{proposition:descent-condition-bound}, the sequence of function values
$\{J(x^{k})\}_{k=0}^{\infty}$ calculated at points of any sequence
$\{x^{k}\}_{k=0}^{\infty},$ generated by the PSG method with bounded outer
perturbations of (\ref{eq:PSG-with-summable-perturbations}), converges.
\end{theorem}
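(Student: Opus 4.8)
The plan is to cast the statement as an application of Lemma \ref{lemma:Cheng}. Since the problem is assumed solvable, $J^\ast=\inf_{x\in\Omega}J(x)$ is finite, and because every iterate satisfies $x^k\in\Omega$ (each is an image of $P_\Omega$), the shifted sequence
\begin{equation}
\alpha_k:=J(x^k)-J^\ast
\end{equation}
is well defined and nonnegative. Thus it suffices to produce a summable nonnegative sequence $\{\varepsilon_k\}$ with $\alpha_{k+1}\le\alpha_k+\varepsilon_k$, and then Lemma \ref{lemma:Cheng} delivers convergence of $\{\alpha_k\}$, hence of $\{J(x^k)\}$.

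To build such an $\varepsilon_k$, I would start from Proposition \ref{proposition:descent-condition-bound}, rewritten as
\begin{equation}
\alpha_{k+1}-\alpha_k=J(x^{k+1})-J(x^k)\le -\eta_1\|x^k-x^{k+1}\|^2+\|\delta^k\|\,\|x^k-x^{k+1}\|,
\end{equation}
where $\eta_1=\tfrac{1}{\overline\tau}-\tfrac{L}{2}>0$ by the upper bound $\overline\tau=\sup_k\tau_k<2/L$ in (\ref{eq:stepsize-bound}). Writing $t:=\|x^k-x^{k+1}\|\ge 0$, the right-hand side is the quadratic $-\eta_1 t^2+\|\delta^k\|\,t$, whose maximum over $t\ge 0$ is attained at $t=\|\delta^k\|/(2\eta_1)$ and equals $\|\delta^k\|^2/(4\eta_1)$. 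Completing the square therefore gives the unconditional bound
\begin{equation}
\alpha_{k+1}-\alpha_k\le \frac{\|\delta^k\|^2}{4\eta_1}=:\varepsilon_k,
\end{equation}
which holds irrespective of the (unknown) step length $\|x^k-x^{k+1}\|$. This completing-the-square step is the conceptual heart of the argument: it is what lets the linear perturbation term $\|\delta^k\|\,\|x^k-x^{k+1}\|$, whose sign we cannot control, be absorbed into a pure error term depending only on the perturbations.

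It then remains to verify $\sum_k\varepsilon_k<+\infty$, i.e.\ $\sum_k\|\delta^k\|^2<+\infty$. From the definition (\ref{eq:error-sum-delta}), $\delta^k=\tfrac{1}{\tau_k}e^k+\theta^k$, and since $\inf_k\tau_k>0$ by (\ref{eq:stepsize-bound}) the factors $1/\tau_k$ are uniformly bounded, so $\|\delta^k\|\le \tfrac{1}{\inf_k\tau_k}\|e^k\|+\|\theta^k\|$. The summability hypotheses (\ref{eq:summable-perturbations}) and (\ref{eq:grad-dev-summable}) then give $\sum_k\|\delta^k\|<+\infty$; in particular $\|\delta^k\|\to 0$, so $\|\delta^k\|\le 1$ for large $k$, whence $\|\delta^k\|^2\le\|\delta^k\|$ eventually and $\sum_k\|\delta^k\|^2<+\infty$ as well. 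With $\{\varepsilon_k\}$ nonnegative and summable and $0\le\alpha_{k+1}\le\alpha_k+\varepsilon_k$, Lemma \ref{lemma:Cheng} yields convergence of $\{\alpha_k\}$, completing the proof. I do not anticipate a genuine obstacle beyond carefully tracking the two-sided stepsize bound (\ref{eq:stepsize-bound}), which is used in two distinct places: the upper bound ensures $\eta_1>0$, while the lower bound $\inf_k\tau_k>0$ is what keeps $\{\|\delta^k\|\}$ summable.
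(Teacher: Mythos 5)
Your proposal is correct and follows essentially the same route as the paper: completing the square on the descent inequality of Proposition \ref{proposition:descent-condition-bound} to obtain $0\le J(x^{k+1})-J^{\ast}\le J(x^{k})-J^{\ast}+\frac{1}{4\eta_{1}}\Vert\delta^{k}\Vert^{2}$, verifying $\sum_{k}\Vert\delta^{k}\Vert^{2}<+\infty$ from (\ref{eq:stepsize-bound}), (\ref{eq:summable-perturbations}) and (\ref{eq:grad-dev-summable}), and invoking Lemma \ref{lemma:Cheng}. Your only deviation is cosmetic, and in fact slightly more careful: you derive summability of $\Vert\delta^{k}\Vert^{2}$ from $\sum_{k}\Vert\delta^{k}\Vert<+\infty$ (so that $\Vert\delta^{k}\Vert^{2}\le\Vert\delta^{k}\Vert$ eventually), whereas the paper's bound $\Vert\delta^{k}\Vert^{2}\le\frac{1}{\tau_{k}^{2}}\Vert e^{k}\Vert^{2}+\Vert\theta^{k}\Vert^{2}$ as written drops the cross term, though its conclusion is unaffected.
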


\begin{proof}
From Proposition \ref{proposition:descent-condition-bound}, we can further get%
\begin{equation}
J(x^{k})-J(x^{k+1})\geq\eta_{1}\left(  \Vert x^{k}-x^{k+1}\Vert-\frac{1}%
{2\eta_{1}}\Vert\delta^{k}\Vert\right)  ^{2}-\frac{1}{4\eta_{1}}\Vert
\delta^{k}\Vert^{2}, \label{equ:descent-condition-bound-others}%
\end{equation}
and since $J(x)\geq J^{\ast}$, for all $x\in \Omega,$ the above relation implies
that%
\begin{equation}
0\leq J(x^{k+1})-J^{\ast}\leq J(x^{k})-J^{\ast}+\frac{1}{4\eta_{1}}\Vert
\delta^{k}\Vert^{2}. \label{equ:fvalue-minus-fmin}%
\end{equation}
By defining $\displaystyle \underline{\tau}:=\inf_{k}\tau_{k}$ and using
Minkowski's inequality, we get%
\begin{equation}
\Vert\delta^{k}\Vert^{2}\leq\frac{1}{\tau_{k}^{2}}\Vert e^{k}\Vert^{2}%
+\Vert\theta^{k}\Vert^{2}\leq\frac{1}{\underline{\tau}^{2}}\Vert e^{k}%
\Vert^{2}+\Vert\theta^{k}\Vert^{2}, \label{eq:error-sum-delta-upperbound}%
\end{equation}
which implies, by (\ref{eq:summable-perturbations}) and
(\ref{eq:grad-dev-summable}), that $\sum_{k=1}^{\infty} \Vert\delta^{k}\Vert^{2}<+\infty$.
Then, by Lemma \ref{lemma:Cheng} and (\ref{equ:fvalue-minus-fmin}), the sequence $\{J(x^k)- J^{\ast}\}_{k=0}^\infty$ converges, and hence the sequence $\{J(x^k)\}_{k=0}^\infty$ also converges.
\end{proof}

In what follows, we prove that any sequence, generated by the PSG method with
bounded outer perturbations of (\ref{eq:PSG-with-summable-perturbations}),
converges to a stationary point of $S$. The following propositions lead to
that result. The first proposition shows that $\Vert x^{k}-x^{k+1}\Vert$ is
bounded above by the difference between objective function values at
corresponding points plus a perturbation term.

\begin{proposition}
\label{proposition:sequence-error-estimation}Under the conditions of
Proposition \ref{proposition:descent-condition-bound}, let $\{x^{k}%
\}_{k=0}^{\infty}$ be any sequence generated by the PSG method with bounded outer perturbations of (\ref{eq:PSG-with-summable-perturbations}). Let $\eta_{1}$ be given by
(\ref{equ:eta2}) and $\{\delta^{k}\}_{k=0}^{\infty}$ be given by
(\ref{eq:error-sum-delta}). Then, it holds that
\begin{equation}\label{equ:sequence-error-upperbound}%
\Vert x^{k}-x^{k+1}\Vert\leq\sqrt{\frac{2}{\eta_{1}}}\left\vert J(x^{k}%
)-J(x^{k+1})\right\vert ^{1/2}+\frac{1}{\eta_{1}}\Vert\delta^{k}
\Vert.
\end{equation}
\end{proposition}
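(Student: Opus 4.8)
The plan is to read the inequality from Proposition~\ref{proposition:descent-condition-bound} as a scalar quadratic inequality in the single unknown $\|x^k-x^{k+1}\|$ and then invert it. Write $a:=\|x^k-x^{k+1}\|\ge 0$, $b:=\|\delta^k\|\ge 0$, and $c:=J(x^k)-J(x^{k+1})$. With this notation, inequality~(\ref{equ:descent-condition-bound}) reads $\eta_1 a^2 - b\,a - c\le 0$. Since $|c|\ge c$, I would first replace $c$ by $|c|$ to obtain the slightly weaker but sign-free inequality $\eta_1 a^2 - b\,a - |c|\le 0$, whose advantage is that its discriminant $b^2+4\eta_1|c|$ is automatically nonnegative regardless of the sign of $J(x^k)-J(x^{k+1})$.

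Next I would solve this quadratic inequality for $a$. Recall that $\eta_1=1/\overline{\tau}-L/2>0$ by the stepsize bound~(\ref{eq:stepsize-bound}), which forces $\overline{\tau}<2/L$; hence the parabola $t\mapsto \eta_1 t^2 - b\,t-|c|$ opens upward, and any $a\ge 0$ satisfying the inequality must lie at or below its larger root:
\[
a\le \frac{b+\sqrt{b^2+4\eta_1|c|}}{2\eta_1}.
\]
The remaining step is a routine simplification. Applying subadditivity of the square root, $\sqrt{b^2+4\eta_1|c|}\le b+2\sqrt{\eta_1|c|}$, gives
\[
a\le \frac{2b+2\sqrt{\eta_1|c|}}{2\eta_1}=\frac{1}{\eta_1}b+\frac{1}{\sqrt{\eta_1}}\,|c|^{1/2}.
\]
Since $1\le 2$ yields $1/\sqrt{\eta_1}\le\sqrt{2/\eta_1}$, this is majorized by $\sqrt{2/\eta_1}\,|c|^{1/2}+\eta_1^{-1}b$, which is exactly~(\ref{equ:sequence-error-upperbound}) after substituting back the meanings of $a$, $b$, and $c$.

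There is no serious obstacle in this argument; once one recognizes that~(\ref{equ:descent-condition-bound}) is simply a quadratic constraint on $\|x^k-x^{k+1}\|$, the rest is elementary. The only points demanding a little care are the passage from $c$ to $|c|$, which is necessary because $J(x^k)-J(x^{k+1})$ is not a priori signed (even though Theorem~\ref{theorem:f-value-converge} shows the sequence $\{J(x^k)\}$ converges) and which guarantees a real discriminant, together with the correct selection of the larger root of the quadratic. I would also note in passing that the stated constant $\sqrt{2/\eta_1}$ is not tight, as the sharper constant $1/\sqrt{\eta_1}$ already suffices.
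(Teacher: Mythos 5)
Your proof is correct and is essentially the paper's argument in different clothing: the paper inverts the very same quadratic constraint (\ref{equ:descent-condition-bound}) on $\Vert x^{k}-x^{k+1}\Vert$ by completing the square --- applying $(p+q)^{2}\leq 2(p^{2}+q^{2})$ to reach the squared bound (\ref{equ:sequence-error-square-upperbound}) and then using $\sqrt{a^{2}+b^{2}}\leq a+b$ --- which amounts exactly to your larger-root formula followed by subadditivity of the square root, and your passage from $J(x^{k})-J(x^{k+1})$ to $\left\vert J(x^{k})-J(x^{k+1})\right\vert$ is precisely the implicit step the paper also needs when extracting roots, since that difference is not a priori nonnegative. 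Your side remark is likewise accurate: the constant can be sharpened to $1/\sqrt{\eta_{1}}$, and the paper's looser $\sqrt{2/\eta_{1}}$ is a by-product of routing through the intermediate inequality (\ref{equ:sequence-error-square-upperbound}), which the paper deliberately keeps in squared form because it is reused verbatim in the proof of Proposition \ref{proposition:convergence-of-series-omegan}.
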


\begin{proof}
By the basic inequality $(p+q)^{2}\leq2(p^{2}+q^{2}), \forall p,q\in\mathbb{R}$, we can write%
\begin{equation}
\Vert x^{k}-x^{k+1}\Vert^{2}\leq2\left(  \left(  \Vert x^{k}-x^{k+1}%
\Vert-\frac{1}{2\eta_{1}}\Vert\delta^{k}\Vert\right)  ^{2}+\left(  \frac
{1}{2\eta_{1}}\Vert\delta^{k}\Vert\right)  ^{2}\right)  . \label{eq:ppqq}%
\end{equation}
From (\ref{equ:descent-condition-bound-others}) and (\ref{eq:ppqq}), we
have%
\begin{equation}
\Vert x^{k}-x^{k+1}\Vert^{2}\leq\frac{2}{\eta_{1}}\left(  J(x^{k}%
)-J(x^{k+1})\right)  +\frac{1}{\eta_{1}^{2}}\Vert\delta^{k}\Vert^{2},
\label{equ:sequence-error-square-upperbound}%
\end{equation}
which allows us to use the inequality $\sqrt{a^{2}+b^{2}}\leq a+b,\forall a,b\geq0$, yielding (\ref{equ:sequence-error-upperbound}).
\end{proof}

The next proposition gives an upper bound on the residual function of
(\ref{eq:residual-function}) in the presence of bounded perturbations.

\begin{proposition}
\label{proposition:residual-upper-bound}Under the conditions of Proposition
\ref{proposition:descent-condition-bound}, if $\{x^{k}\}_{k=0}^{\infty}$ is
any sequence generated by the PSG method with bounded outer perturbations of
(\ref{eq:PSG-with-summable-perturbations}), then there exists a constant $\eta_{2}>0$ such
that, for the residual function of (\ref{eq:residual-function}) we have, for
all $k\geq0,$%
\begin{equation}
\Vert r(x^{k})\Vert\leq\eta_{2}(\Vert x^{k}-x^{k+1}\Vert+\Vert e^{k}%
\Vert+\Vert\theta^{k}\Vert). \label{equ:residual-upper-bound}%
\end{equation}
\end{proposition}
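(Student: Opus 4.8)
The plan is to bound $\|r(x^k)\|$ by comparing the true residual step, which uses the pure gradient $\nabla J(x^k)$ with unit stepsize, against the actual PSG step, which uses the scaled and perturbed direction with stepsize $\tau_k$. The residual is $r(x^k) = x^k - P_\Omega(x^k - \nabla J(x^k))$, while the iterate satisfies $x^{k+1} = P_\Omega(x^k - \tau_k D(x^k)\nabla J(x^k) + e^k)$. First I would write $\|r(x^k)\| \le \|x^k - x^{k+1}\| + \|x^{k+1} - P_\Omega(x^k - \nabla J(x^k))\|$ by the triangle inequality, so that the task reduces to controlling the second term, which measures how far the actual next iterate is from the ``ideal'' residual-defining projection.

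For that second term I would invoke the nonexpansiveness of $P_\Omega$ from Lemma~\ref{lemma:projection-onto-convex-sets}(ii):
\begin{align}
\|x^{k+1} - P_\Omega(x^k - \nabla J(x^k))\| &= \|P_\Omega(x^k - \tau_k D(x^k)\nabla J(x^k) + e^k) - P_\Omega(x^k - \nabla J(x^k))\| \nonumber\\
&\le \|(\tau_k D(x^k) - I)\nabla J(x^k) + e^k\|. \nonumber
\end{align}
The key is to re-express the argument difference in terms of the quantities that already appear in the statement. Writing $\tau_k D(x^k)\nabla J(x^k) = \tau_k(\nabla J(x^k) - \theta^k)$ using the definition (\ref{eq:theta-k}) of $\theta^k$, the difference becomes $(\tau_k - 1)\nabla J(x^k) - \tau_k \theta^k + e^k$, so the bound splits into a term proportional to $\|\nabla J(x^k)\|$ plus terms controlled by $\|\theta^k\|$ and $\|e^k\|$ (with $\tau_k$ bounded via (\ref{eq:stepsize-bound})).

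The remaining obstacle, and the one I expect to be the crux, is bounding $\|\nabla J(x^k)\|$ by $\|x^k - x^{k+1}\|$ up to the perturbation terms, so that the final inequality has the claimed form (\ref{equ:residual-upper-bound}) with only $\|x^k-x^{k+1}\|$, $\|e^k\|$ and $\|\theta^k\|$ on the right. For this I would again use the projection inequality Lemma~\ref{lemma:projection-onto-convex-sets}(i) at the actual iterate together with Lemma~\ref{lemma:geometry-projection}: the monotonicity of $\varphi(t)$ gives that $\|P_\Omega(x^k + t d^k) - x^k\|/t$ is nonincreasing, which lets one relate the scaled-direction step length to $\|d^k\|$ where $d^k = -\tau_k D(x^k)\nabla J(x^k)$, and hence tie $\|\nabla J(x^k)\|$ (through the diagonal scaling, whose entries are bounded below) to $\|x^k - x^{k+1}\| + \|e^k\|$. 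Collecting every constant --- the stepsize bounds, the uniform bound on the scaling matrices, and $L$ --- into a single $\eta_2 > 0$ then yields (\ref{equ:residual-upper-bound}).

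The main difficulty is the bookkeeping around the scaling matrix: one needs a uniform lower bound on the diagonal entries of $D(x^k)$ to convert the scaled direction back into $\|\nabla J(x^k)\|$, and the assumption (\ref{eq:grad-dev-summable}) that $\|\theta^k\|$ is summable is precisely what keeps the deviation of $D(x^k)\nabla J(x^k)$ from $\nabla J(x^k)$ under control, so I would lean on it rather than on any direct spectral assumption on $D(x^k)$. Everything else is triangle inequalities and nonexpansiveness, so once $\|\nabla J(x^k)\|$ is absorbed, the estimate follows by assembling the pieces.
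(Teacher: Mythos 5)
Your opening triangle inequality and the use of nonexpansiveness are fine as far as they go, but the decomposition you then commit to --- rewriting the argument difference as $(1-\tau_k)\nabla J(x^k) + \tau_k\theta^k + e^k$ and promising to absorb $\|\nabla J(x^k)\|$ into $\|x^k - x^{k+1}\| + \|e^k\| + \|\theta^k\|$ --- has a genuine gap, and it cannot be repaired. No bound of the form $\|\nabla J(x^k)\| \leq C\left(\|x^k - x^{k+1}\| + \|e^k\| + \|\theta^k\|\right)$ can hold: take $e^k = 0$, $D(x^k) = I_n$ (so $\theta^k = 0$), and $x^k = x^* \in S$ a stationary point on the boundary of $\Omega$ with $\nabla J(x^*) \neq 0$. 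Since such an $x^*$ is a fixed point of $P_{\Omega}(x - \tau\nabla J(x))$ for \emph{every} $\tau > 0$, we get $x^{k+1} = x^k$, so the right-hand side of your target bound vanishes while $\|\nabla J(x^k)\|$ does not. (The proposition itself survives this example because both sides of (\ref{equ:residual-upper-bound}) are zero; it is your intermediate nonexpansiveness estimate, applied across two different effective step lengths, that is too lossy --- the projection absorbs the normal component of the gradient, and a bound containing a standalone $|1-\tau_k|\,\|\nabla J(x^k)\|$ does not.) Your hope of rescuing this with Lemma \ref{lemma:geometry-projection} points the wrong way: that lemma compares projection displacements for the \emph{same} direction at different step lengths, and it can never produce a lower bound on the displacement in terms of $\|\nabla J(x^k)\|$, precisely because the displacement can be zero while the gradient is not. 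Your aside about the diagonal entries of $D(x^k)$ being ``bounded below'' is also not an available hypothesis: for the EM-type scaling (\ref{eq:KL-scaling-matrix-general}) the entries $x_j/\hat{s}_j$ can approach zero; only the summability of $\|\theta^k\|$ is assumed.

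The paper's proof orders the estimates so that a bare $\|\nabla J(x^k)\|$ never appears. First, nonexpansiveness is applied only to the perturbation, giving $\|x^{k+1} - P_{\Omega}(x^k - \tau_k D(x^k)\nabla J(x^k))\| \leq \|e^k\|$, hence by the triangle inequality $\|x^k - P_{\Omega}(x^k - \tau_k D(x^k)\nabla J(x^k))\| \leq \|x^k - x^{k+1}\| + \|e^k\|$. Second --- and this is the step your plan is missing --- Lemma \ref{lemma:geometry-projection} is applied with the \emph{fixed} direction $d = -D(x^k)\nabla J(x^k)$ to change the step length from $\tau_k$ to $1$: it yields $\|x^k - P_{\Omega}(x^k - \tau_k D(x^k)\nabla J(x^k))\| \geq \hat{\tau}\,\|x^k - P_{\Omega}(x^k - D(x^k)\nabla J(x^k))\|$ with $\hat{\tau} = \min\{1, \inf_k \tau_k\} > 0$, so the mismatch $\tau_k \neq 1$ costs only the multiplicative constant $1/\hat{\tau}$ and never generates a $(1-\tau_k)\nabla J(x^k)$ term. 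Only after the step length is normalized does nonexpansiveness swap the direction, $\|P_{\Omega}(x^k - D(x^k)\nabla J(x^k)) - P_{\Omega}(x^k - \nabla J(x^k))\| \leq \|\theta^k\|$, and the three estimates assemble into (\ref{equ:residual-upper-bound}) with $\eta_2 = 1/\hat{\tau}$.
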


\begin{proof}
From (\ref{eq:PSG-with-summable-perturbations}), it holds true, by (\ref{eq:projection-nonexpansive}), that
\begin{equation}
\Vert x^{k+1}-P_{\Omega}(x^{k}-\tau_{k}D(x^{k})\nabla J(x^{k}))\Vert \leq \Vert e^{k}\Vert.
\end{equation}
Then, we can get
\begin{align}
&\quad~\Vert x^{k}-P_{\Omega}(x^{k}-\tau_{k}D(x^{k})\nabla J(x^{k}))\Vert \nonumber\\
&\leq\Vert x^{k}-x^{k+1}\Vert +\Vert x^{k+1}-P_{\Omega}(x^{k}-\tau_{k}D(x^{k})\nabla J(x^{k}))\Vert \nonumber\\
&\leq\Vert x^{k}-x^{k+1}\Vert+\Vert e^{k}\Vert. \label{eq:res-upperbound-1}%
\end{align}
By Lemma \ref{lemma:geometry-projection}, the left-hand side of (\ref{eq:res-upperbound-1}) is bounded below, according to
\begin{equation}\label{eq:res-lowerbound-1}
\Vert x^{k}-P_{\Omega}(x^{k}-\tau_{k}D(x^{k})\nabla J(x^{k}))\Vert \geq \hat{\tau}\Vert x^{k}-P_{\Omega}(x^{k}-D(x^{k})\nabla J(x^{k}))\Vert
\end{equation}
with $\displaystyle \hat{\tau}:=\min\{1,\inf_{k}\tau_{k}\}>0$.
By (\ref{eq:res-upperbound-1}) and (\ref{eq:res-lowerbound-1}), we then obtain
\begin{equation}
\Vert x^{k}-P_{\Omega}(x^{k}-D(x^{k})\nabla J(x^{k}))\Vert\leq\frac{1}{\hat{\tau}%
}(\Vert x^{k}-x^{k+1}\Vert+\Vert e^{k}\Vert).
\label{equ:scaled-error-bound-intermediate}%
\end{equation}
By the nonexpansiveness of the projection operator (\ref{eq:projection-nonexpansive}), and the triangle inequality, we see that the residual function, defined by (\ref{eq:residual-function}), satisfies%
\begin{align}
\Vert r(x^{k})\Vert &\leq  \Vert x^{k}-P_{\Omega}(x^{k}-D(x^{k})\nabla
J(x^{k}))\Vert \nonumber\\
&\quad + \Vert P_{\Omega}(x^{k}-D(x^{k})\nabla J(x^{k}))-P_{\Omega}(x^{k}-\nabla
J(x^{k}))\Vert\nonumber\\
&\leq\Vert x^{k}-P_{\Omega}(x^{k}-D(x^{k})\nabla J(x^{k}))\Vert+\Vert\nabla
J(x^{k})-D(x^{k})\nabla J(x^{k})\Vert\nonumber\\
&\leq\frac{1}{\hat{\tau}}(\Vert x^{k}-x^{k+1}\Vert+\Vert e^{k}\Vert
)+\Vert\theta^{k}\Vert),
\end{align}
which, by choosing $\eta_{2}:=$ $\displaystyle\frac{1}{\hat{\tau}},$ completes
the proof.
\end{proof}

The next proposition estimates the difference between the objective function
value at the current iterate and the optimal value. The proof is inspired by
that of \cite[Theorem 3.1]{Luo1993Error}.

\begin{proposition}
\label{proposition:f-and-fmin-error-upperbound}Under the conditions of
Proposition \ref{proposition:descent-condition-bound}, if $\{x^{k}%
\}_{k=0}^{\infty}$ is any sequence generated by the PSG method with bounded outer
perturbations of (\ref{eq:PSG-with-summable-perturbations}), then there exists a constant
$\eta_{3}>0$ and an index $ K_{3}>0 $ such that for all $ k > K_{3} $%
\begin{equation}\label{equ:f-and-fmin-error-upperbound}%
J(x^{k+1})-J^{\ast}\leq\eta_{3}\left(  \Vert x^{k}-x^{k+1}\Vert+\Vert
e^{k}\Vert + \Vert\theta^{k}\Vert\right)  ^{2}.
\end{equation}
\end{proposition}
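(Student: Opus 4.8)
The plan is to estimate $J(x^{k+1})-J^{\ast}$ by a single first-order term evaluated at $x^{k+1}$ and then to control that term by the three quantities on the right-hand side of (\ref{equ:f-and-fmin-error-upperbound}). Because $J$ is strongly convex, every stationary point is a global minimizer attaining the value $J^{\ast}$; let $\bar{x}^{k}$ be the point of $S$ closest to $x^{k}$, so that $\|x^{k}-\bar{x}^{k}\|=d(x^{k},S)$ and $J(\bar{x}^{k})=J^{\ast}$. The gradient inequality for the convex function $J$ at $x^{k+1}$ then gives
\[
J(x^{k+1})-J^{\ast}=J(x^{k+1})-J(\bar{x}^{k})\leq\langle\nabla J(x^{k+1}),x^{k+1}-\bar{x}^{k}\rangle,
\]
so it suffices to bound this inner product.

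First I would exploit the iteration itself. Applying the projection characterization Lemma \ref{lemma:projection-onto-convex-sets}(i) to (\ref{eq:PSG-with-summable-perturbations}) with the feasible choice $y=\bar{x}^{k}\in S\subseteq\Omega$, and substituting $D(x^{k})\nabla J(x^{k})=\nabla J(x^{k})-\theta^{k}$ from (\ref{eq:theta-k}), produces an upper bound on $\tau_{k}\langle\nabla J(x^{k}),x^{k+1}-\bar{x}^{k}\rangle$. I would then replace $\nabla J(x^{k})$ by $\nabla J(x^{k+1})$, absorbing the difference $\langle\nabla J(x^{k+1})-\nabla J(x^{k}),x^{k+1}-\bar{x}^{k}\rangle$ through the Lipschitz bound (\ref{eq:Lipschitz-gradient}). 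After Cauchy--Schwarz on every term, dividing by $\tau_{k}$ and using $1/\tau_{k}\leq1/\underline{\tau}$, this yields a factored estimate
\[
\langle\nabla J(x^{k+1}),x^{k+1}-\bar{x}^{k}\rangle\leq C_{1}\,\|x^{k+1}-\bar{x}^{k}\|\bigl(\|x^{k}-x^{k+1}\|+\|e^{k}\|+\|\theta^{k}\|\bigr),
\]
where $C_{1}$ depends only on $L$ and on $\underline{\tau}=\inf_{k}\tau_{k}$.

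Next I would control the factor $\|x^{k+1}-\bar{x}^{k}\|$. The triangle inequality gives $\|x^{k+1}-\bar{x}^{k}\|\leq\|x^{k+1}-x^{k}\|+d(x^{k},S)$, and $d(x^{k},S)$ is handled by the error bound condition. This is exactly where the index $K_{3}$ enters: to apply Condition \ref{assumption:error-bound} (which holds by Lemma \ref{lemma:strongly-convex-error-bound-hold}) one needs $\|r(x^{k})\|\leq\varepsilon$ for the level $v=\sup_{k}J(x^{k})$, and this level is finite since $\{J(x^{k})\}_{k=0}^{\infty}$ converges by Theorem \ref{theorem:f-value-converge}. To guarantee that the residuals eventually enter that region I would use Proposition \ref{proposition:residual-upper-bound} together with $\|x^{k}-x^{k+1}\|\to0$ --- which follows from Proposition \ref{proposition:sequence-error-estimation}, the convergence of $\{J(x^{k})\}$, and $\|\delta^{k}\|\to0$ --- and with the summability assumptions (\ref{eq:summable-perturbations}) and (\ref{eq:grad-dev-summable}) forcing $\|e^{k}\|,\|\theta^{k}\|\to0$; hence $\|r(x^{k})\|\to0$. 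Thus there is $K_{3}$ beyond which $d(x^{k},S)\leq\beta\eta_{2}\bigl(\|x^{k}-x^{k+1}\|+\|e^{k}\|+\|\theta^{k}\|\bigr)$, and therefore $\|x^{k+1}-\bar{x}^{k}\|\leq C_{2}\bigl(\|x^{k}-x^{k+1}\|+\|e^{k}\|+\|\theta^{k}\|\bigr)$ with $C_{2}=1+\beta\eta_{2}$.

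Substituting this bound into the factored estimate and setting $\eta_{3}:=C_{1}C_{2}$ delivers (\ref{equ:f-and-fmin-error-upperbound}) for all $k>K_{3}$. The main obstacle, as indicated, is the third step: verifying that the error bound is genuinely applicable, i.e.\ that the residual $\|r(x^{k})\|$ eventually falls below the threshold $\varepsilon$ of Condition \ref{assumption:error-bound}. Everything else is the projection inequality, the Lipschitz replacement $\nabla J(x^{k})\to\nabla J(x^{k+1})$, and routine Cauchy--Schwarz bookkeeping.
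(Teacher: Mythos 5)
Your proof is correct, and it reaches the same estimate through a leaner route than the paper. The shared skeleton is identical: apply the projection characterization of Lemma \ref{lemma:projection-onto-convex-sets}(i) at $x^{k+1}$ with the nearest stationary point as the test vector, substitute $D(x^{k})\nabla J(x^{k})=\nabla J(x^{k})-\theta^{k}$, and invoke Condition \ref{assumption:error-bound} together with Proposition \ref{proposition:residual-upper-bound} to get $d(x^{k},S)\leq\beta\eta_{2}\bigl(\Vert x^{k}-x^{k+1}\Vert+\Vert e^{k}\Vert+\Vert\theta^{k}\Vert\bigr)$ past some $K_{3}$, with the residual limits justified exactly as you describe. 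You diverge from the paper in two places. First, where you use the convexity gradient inequality $J(x^{k+1})-J(\bar{x}^{k})\leq\langle\nabla J(x^{k+1}),x^{k+1}-\bar{x}^{k}\rangle$ and then swap $\nabla J(x^{k+1})$ for $\nabla J(x^{k})$ via the Lipschitz bound on $\Vert x^{k+1}-x^{k}\Vert$, the paper instead applies the mean value theorem, introducing an intermediate point $\check{x}^{k}$ on the segment between $x^{k+1}$ and $\hat{x}^{k}$ and then bounding $\Vert\check{x}^{k}-x^{k}\Vert$ by a further triangle-inequality and error-bound argument; your version avoids that auxiliary point entirely and in fact yields a slightly smaller constant ($\eta_{3}=(L+1+1/\underline{\tau})(1+\beta\eta_{2})$ versus the paper's $(L+L\beta\eta_{2}+1+1/\underline{\tau})(1+\beta\eta_{2})$). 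Second, you dispense with Condition \ref{assumption:proper-separation}: since $J$ is convex, every stationary point is a global minimizer (plain convexity suffices here, not strong convexity as you state, and this is precisely the paper's own argument that $J^{\infty}=J^{\ast}$), so $J(\bar{x}^{k})=J^{\ast}$ holds for every $k$ and the limit argument $\hat{x}^{k}-\hat{x}^{k+1}\to 0$ plus the isocost-separation step become unnecessary; in your proof $K_{3}$ serves only the error-bound threshold. The trade-off is that the paper's heavier machinery, inherited from the Luo--Tseng feasible-descent framework, survives without convexity of $J$ (where stationary values may differ and isocost separation genuinely matters), whereas your shortcut exploits the convex setting of the proposition, where it is the more economical argument --- indeed, by Lemma \ref{lemma:strongly-convex-error-bound-hold} the error bound here is even global, so your careful verification that $\Vert r(x^{k})\Vert$ falls below $\varepsilon$, while faithful to Condition \ref{assumption:error-bound} as stated, is not the obstacle you feared.
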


\begin{proof}
Note that (\ref{eq:summable-perturbations}) and (\ref{eq:grad-dev-summable}) imply that $\lim_{k\rightarrow \infty}\Vert e^{k}\Vert=0$ and $\lim_{k\rightarrow\infty}\Vert\theta^{k}%
\Vert=0$, respectively, hence, $\lim_{k\rightarrow\infty}\Vert\delta^{k}%
\Vert=0$. Then, Theorem \ref{theorem:f-value-converge} and Proposition
\ref{proposition:sequence-error-estimation} imply that%
\begin{equation}\label{eq:lim-cons}%
\lim_{k\rightarrow\infty}\Vert x^{k}-x^{k+1}\Vert=0,
\end{equation}
and Proposition \ref{proposition:residual-upper-bound} shows that%
\begin{equation}
\lim_{k\rightarrow\infty}\Vert r(x^{k})\Vert=0.
\label{equ:residual-limit-zero}%
\end{equation}
Condition \ref{assumption:error-bound} guarantees that there exist an index
$K_{2}>K_{1}$ and a scalar $\beta>0$ such that for all $k>K_{2}$%
\begin{equation}
\Vert x^{k}-\hat{x}^{k}\Vert\leq\beta\Vert r(x^{k})\Vert,
\label{eq:local-error-bound-application}%
\end{equation}
where $\hat{x}^{k}\in S$ is a point for which $d(x^{k},S)=\Vert x^{k}-\hat
{x}^{k}\Vert$. The last two relations (\ref{equ:residual-limit-zero}) and
(\ref{eq:local-error-bound-application}) then imply that
\begin{equation}
\lim_{k\rightarrow\infty}(x^{k}-\hat{x}^{k})=0,
\label{equ:error-between-xn-and-solution-limit-zero}%
\end{equation}
and, using the triangle inequality and (\ref{eq:lim-cons}), we get%
\begin{equation}
\lim_{k\rightarrow\infty}(\hat{x}^{k}-\hat{x}^{k+1})=0. \label{eq:hats}%
\end{equation}
In view of Condition \ref{assumption:proper-separation}, and since $\hat
{x}^{k}\in S$ for all $k\geq0,$ (\ref{eq:hats}) implies that there exists an
integer $K_{3}>K_{2}$ and a scalar $J^{\infty}$ such that%
\begin{equation}
J(\hat{x}^{k})=J^{\infty},\quad\text{for all }k>K_{3}. \label{eq:Jinfinity}%
\end{equation}
Next we show that $J^{\infty}=J^{\ast}$. For any $k>K_{3}$, since $\hat{x}^{k}$ is a stationary
point of $J(x)$ over $\Omega$, it is true that
\begin{equation}
\langle \nabla J(\hat{x}^{k}), x-\hat{x}^{k} \rangle \geq 0, \quad \forall x\in\Omega .
\end{equation}
From the optimality condition of constrained convex optimization \cite[Proposition 2.1.2]{Bertsekas1999Nonlinear}, we obtain that
\begin{equation}
J(x) \geq J(\hat{x}^{k})=J^\infty, \quad \forall x\in\Omega.
\end{equation}
By the definition of $J^{\ast}$, we have $J(x) \geq J^\infty \geq J^{\ast}$ for any $x\in \Omega$, and hence
\begin{equation}\label{eq:JisJ}
J^\infty = J^{\ast}.
\end{equation}
If not, then $J^\infty > J^{\ast}$, which means that $J^\infty$ will be the infimum of $J(x)$ over $\Omega$
instead of $J^{\ast}$ and contradiction occurs.
\begin{figure}[htb]
\begin{center}
\includegraphics[height=4.4cm]{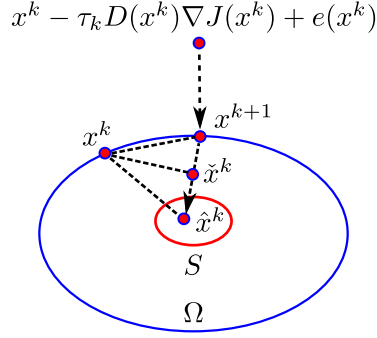}
\caption{An illustration of the geometric relationship between points $x^k$, $x^{k+1}$, $\check{x}^k$ and $\hat{x}^k$}
\label{fig:Convex-Projection}
\end{center}
\end{figure}
\newline\noindent Since $\Omega$ is convex and $x^{k+1}$ is the projection of $x^{k}-\tau
_{k}D(x^{k})\nabla J(x^{k})+e^{k}$ onto $\Omega$ (See Fig. \ref{fig:Convex-Projection}), by Lemma \ref{lemma:projection-onto-convex-sets} (i), the following inequality holds%
\begin{equation}
\langle x^{k}-\tau_{k}D(x^{k})\nabla J(x^{k})+e(x^{k})-x^{k+1},x^{k+1}-\hat
{x}^{k}\rangle\geq0,
\end{equation}
and arrangement of the terms leads to%
\begin{align}\label{eq:descent-estimation-lowerbound-others}%
&\quad~  \langle\nabla J(x^{k}),x^{k+1}-\hat{x}^{k}\rangle\nonumber\\
&  \leq \langle\theta^{k}+\frac{1}{\tau_{k}}e^{k},x^{k+1}-\hat{x}^{k}\rangle+\frac{1}{\tau_{k}}\langle x^{k}-x^{k+1},x^{k+1}-\hat{x}^{k}%
\rangle\nonumber\\
&  \leq\left(  \Vert\theta^{k}\Vert+\frac{1}{\underline{\tau}}\Vert e^{k}%
\Vert+\frac{1}{\underline{\tau}}\Vert x^{k}-x^{k+1}\Vert\right)  \Vert
x^{k+1}-\hat{x}^{k}\Vert,
\end{align}
where $\displaystyle \underline{\tau}:=\inf_{k}\tau_{k}$, as defined in (\ref{eq:error-sum-delta-upperbound}). By
using the mean value theorem again, there is an $\check{x}^{k}$ lying in the line segment between $x^{k+1}$ and $\hat{x}^{k}$
such that%
\begin{equation}
J(x^{k+1})-J(\hat{x}^{k})=\langle\nabla J(\check{x}^{k}),x^{k+1}-\hat{x}%
^{k}\rangle. \label{equ:mean-value-theorem}%
\end{equation}
Combining (\ref{eq:descent-estimation-lowerbound-others}) and
(\ref{equ:mean-value-theorem}), yields, in view of (\ref{eq:Jinfinity}) and
(\ref{eq:JisJ}), since we are looking at $k>K_{3}>K_{2}>K_{1}$,%
\begin{align}
&\quad~  J(x^{k+1})-J^{\ast}\nonumber\\
&= J(x^{k+1})-J(\hat{x}^{k})\nonumber\\
&= \langle\nabla J(\check{x}^{k})-\nabla J(x^{k}),x^{k+1}-\hat{x}^{k}%
\rangle+\langle\nabla J(x^{k}),x^{k+1}-\hat{x}^{k}\rangle\nonumber\\
&\leq \Vert\nabla J(\check{x}^{k})-\nabla J(x^{k})\Vert\Vert x^{k+1}-\hat
{x}^{k}\Vert+\langle\nabla J(x^{k}),x^{k+1}-\hat{x}^{k}\rangle\nonumber\\
&\leq \left(  L\Vert\check{x}^{k}-x^{k}\Vert+\Vert\theta^{k}\Vert+\frac
{1}{\underline{\tau}}\Vert e^{k}\Vert+\frac{1}{\underline{\tau}}\Vert
x^{k}-x^{k+1}\Vert\right)  \Vert x^{k+1}-\hat{x}^{k}\Vert.\text{ }
\label{equ:f-and-fmin-error-upperbound-intermidiate}%
\end{align}
To finish the proof we further bound from above the right-hand side of
(\ref{equ:f-and-fmin-error-upperbound-intermidiate}). For the term
$\Vert\check{x}^{k}-x^{k}\Vert$, we note that $\check{x}^{k}$ is in the
line segment between $x^{k+1}$ and $\hat{x}^{k},$ thus,%
\begin{equation}
\Vert x^{k+1}-\check{x}^{k}\Vert+\Vert\check{x}^{k}-\hat{x}^{k}\Vert=\Vert
x^{k+1}-\hat{x}^{k}\Vert\leq\Vert x^{k}-x^{k+1}\Vert+\Vert x^{k}-\hat{x}%
^{k}\Vert,
\end{equation}
which, when combined with%
\begin{equation}
\Vert\check{x}^{k}-x^{k}\Vert\leq\Vert x^{k}-x^{k+1}\Vert+\Vert x^{k+1}
-\check{x}^{k}\Vert,
\end{equation}
and%
\begin{equation}
\Vert\check{x}^{k}-x^{k}\Vert\leq\Vert x^{k}-\hat{x}^{k}\Vert+\Vert\hat{x}%
^{k}-\check{x}^{k}\Vert,
\end{equation}
yields%
\begin{equation}\label{eq:checkxk-xk-upperbound}%
\Vert\check{x}^{k}-x^{k}\Vert\leq\Vert x^{k}-x^{k+1}\Vert+\Vert x^{k}-\hat
{x}^{k}\Vert.
\end{equation}
On the other hand, (\ref{eq:local-error-bound-application}) and
(\ref{equ:residual-upper-bound}) allows us to write%
\begin{equation}\label{equ:error-between-xn-and-solution}%
\Vert x^{k}-\hat{x}^{k}\Vert\leq\beta\eta_{2}(\Vert x^{k}-x^{k+1}\Vert+\Vert
e^{k}\Vert+\Vert\theta^{k}\Vert),\quad\text{for all }k>K_{3}.
\end{equation}
Thus, we have for the term $L\Vert\check{x}^{k}-x^{k}\Vert$, using
(\ref{eq:checkxk-xk-upperbound}) and (\ref{equ:error-between-xn-and-solution}),%
\begin{align}
L\Vert\check{x}^{k}-x^{k}\Vert &  \leq L\left(  \Vert x^{k}-x^{k+1}\Vert+\Vert
x^{k}-\hat{x}^{k}\Vert\right) \nonumber\\
&  \leq L\left(  \Vert x^{k}-x^{k+1}\Vert+\beta\eta_{2}(\Vert x^{k}%
-x^{k+1}\Vert+\Vert e^{k}\Vert+\Vert\theta^{k}\Vert)\right) \nonumber\\
&  \leq L(1+\beta\eta_{2})(\Vert x^{k}-x^{k+1}\Vert+\Vert e^{k}\Vert
+\Vert\theta^{k}\Vert).
\end{align}
For the term $\Vert x^{k+1}-\hat{x}^{k}\Vert$ in (\ref{eq:descent-estimation-lowerbound-others}), we use the triangle inequality
and (\ref{equ:error-between-xn-and-solution}) to get%
\begin{align}\label{eq:xkplus1-hatxk-upperbound}
\Vert x^{k+1}-\hat{x}^{k}\Vert &  \leq\Vert x^{k}-x^{k+1}\Vert+\Vert
x^{k}-\hat{x}^{k}\Vert\nonumber\\
&  \leq\Vert x^{k}-x^{k+1}\Vert+\beta\eta_{2}(\Vert x^{k}-x^{k+1}\Vert+\Vert
e^{k}\Vert+\Vert\theta^{k}\Vert)\nonumber\\
&  \leq(1+\beta\eta_{2})(\Vert x^{k}-x^{k+1}\Vert+\Vert e^{k}\Vert+\Vert
\theta^{k}\Vert).
\end{align}
Finally, the term $\displaystyle\Vert\theta^{k}\Vert+\displaystyle\frac
{1}{\underline{\tau}}\Vert e^{k}\Vert+\displaystyle\frac{1}{\underline{\tau}%
}\Vert x^{k}-x^{k+1}\Vert$ in the right-hand side of
(\ref{equ:f-and-fmin-error-upperbound-intermidiate}) can also be bounded above by
\begin{equation}
\displaystyle\Vert\theta^{k}\Vert+\displaystyle\frac{1}{\underline{\tau}}\Vert
e^{k}\Vert+\displaystyle\frac{1}{\underline{\tau}}\Vert x^{k}-x^{k+1}\Vert
\leq(1+\displaystyle\frac{1}{\underline{\tau}})(\Vert x^{k}-x^{k+1}\Vert+\Vert
e^{k}\Vert+\Vert\theta^{k}\Vert).
\end{equation}
Defining
\begin{equation}
\displaystyle\eta_{3}:=(L+L\beta\eta_{2}+1+\frac{1}{\underline{\tau}}%
)(1+\beta\eta_{2}),
\end{equation}
and using all the bounds from above, i.e., (\ref{equ:f-and-fmin-error-upperbound-intermidiate}), (\ref{eq:checkxk-xk-upperbound}), (\ref{equ:error-between-xn-and-solution}) and (\ref{eq:xkplus1-hatxk-upperbound}), we obtain%
\begin{equation}
J(x^{k+1})-J^{\ast}\leq\eta_{3}\left(  \Vert x^{k}-x^{k+1}\Vert+\Vert
e^{k}\Vert+\Vert\theta^{k}\Vert\right)  ^{2},\text{ for all }k>K_{3},
\end{equation}
which completes the proof.
\end{proof}

Combining Theorem \ref{theorem:f-value-converge}, Proposition \ref{proposition:sequence-error-estimation} and Proposition \ref{proposition:f-and-fmin-error-upperbound}, it can be seen that $\lim_{k \rightarrow \infty} J(x^k) = J^{\ast}$. As an immediate application of the Proposition
\ref{proposition:f-and-fmin-error-upperbound}, we get the following
intermediate proposition that leads to the final result.

\begin{proposition}
\label{proposition:convergence-of-series-omegan}Under the conditions of
Proposition \ref{proposition:descent-condition-bound}, if $\{x^{k}%
\}_{k=0}^{\infty}$ is any sequence generated by the PSG method with bounded outer
perturbations of (\ref{eq:PSG-with-summable-perturbations}), and if $\displaystyle\lambda
_{k}:=\sqrt{J(x^{k})-J^{\ast}}$ for all $k\geq0$, then $\displaystyle\sum
_{k=0}^{\infty}\lambda_{k}<+\infty$.
\end{proposition}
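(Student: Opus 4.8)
The plan is to convert the quadratic optimality-gap estimate of Proposition~\ref{proposition:f-and-fmin-error-upperbound} into an affine contraction for $\lambda_k$ and then sum it. First I would combine, for every $k>K_3$, the bound of Proposition~\ref{proposition:f-and-fmin-error-upperbound} with the squared displacement estimate (\ref{equ:sequence-error-square-upperbound}). Writing $J(x^k)-J(x^{k+1})=\lambda_k^2-\lambda_{k+1}^2$, the former gives
\[
\lambda_{k+1}^2=J(x^{k+1})-J^{\ast}\leq 3\eta_3\bigl(\|x^k-x^{k+1}\|^2+\|e^k\|^2+\|\theta^k\|^2\bigr),
\]
using $(p+q+r)^2\leq 3(p^2+q^2+r^2)$, while (\ref{equ:sequence-error-square-upperbound}) reads $\|x^k-x^{k+1}\|^2\leq \tfrac{2}{\eta_1}(\lambda_k^2-\lambda_{k+1}^2)+\tfrac{1}{\eta_1^2}\|\delta^k\|^2$. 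Substituting the latter into the former and collecting the $\lambda_{k+1}^2$ terms on the left yields, with $C:=6\eta_3/\eta_1>0$,
\[
(1+C)\lambda_{k+1}^2\leq C\lambda_k^2+t_k,\qquad t_k:=3\eta_3\Bigl(\tfrac{1}{\eta_1^2}\|\delta^k\|^2+\|e^k\|^2+\|\theta^k\|^2\Bigr).
\]

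Dividing by $1+C$ gives $\lambda_{k+1}^2\leq\sigma\lambda_k^2+t_k/(1+C)$ with $\sigma:=C/(1+C)\in(0,1)$, and taking square roots together with $\sqrt{a+b}\leq\sqrt a+\sqrt b$ produces the affine contraction $\lambda_{k+1}\leq\sqrt{\sigma}\,\lambda_k+w_k$ for all $k>K_3$, where $w_k:=\sqrt{t_k/(1+C)}$. The delicate point — and the step I expect to be the main obstacle — is to check that $\{w_k\}$ is summable: mere summability of $t_k$ would not imply summability of $\sqrt{t_k}$. This works only because $t_k$ is assembled from the \emph{squares} of the summable sequences $\|\delta^k\|,\|e^k\|,\|\theta^k\|$, so that $\sqrt{t_k}\leq\sqrt{3\eta_3}\bigl(\tfrac{1}{\eta_1}\|\delta^k\|+\|e^k\|+\|\theta^k\|\bigr)$ is dominated by a linear combination of summable sequences (recall from (\ref{eq:error-sum-delta}) that $\|\delta^k\|\leq\tfrac{1}{\underline{\tau}}\|e^k\|+\|\theta^k\|$); hence $\sum_k w_k<+\infty$ by (\ref{eq:summable-perturbations}) and (\ref{eq:grad-dev-summable}). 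Note also that the rearrangement guarantees the contraction factor $\sqrt{\sigma}$ is automatically strictly less than $1$ since $C>0$.

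Finally I would sum the contraction. Setting $A_N:=\sum_{k=K_3+1}^{N}\lambda_k$ and adding $\lambda_{k+1}\leq\sqrt{\sigma}\,\lambda_k+w_k$ over $k=K_3+1,\dots,N$, a reindexing gives $A_N+\lambda_{N+1}-\lambda_{K_3+1}\leq\sqrt{\sigma}\,A_N+W$, where $W:=\sum_{k>K_3}w_k<+\infty$; therefore
\[
(1-\sqrt{\sigma})A_N\leq\lambda_{K_3+1}+W,
\]
a bound independent of $N$. Since $\sqrt{\sigma}<1$ and the partial sums of the nonnegative sequence $\{\lambda_k\}$ are monotone nondecreasing and bounded, they converge, so $\sum_{k>K_3}\lambda_k<+\infty$. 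Adding the finitely many terms $\lambda_0,\dots,\lambda_{K_3+1}$, each finite because $\lim_{k\to\infty}J(x^k)=J^{\ast}$ ensures every $\lambda_k=\sqrt{J(x^k)-J^{\ast}}$ is well defined and finite, yields $\sum_{k=0}^{\infty}\lambda_k<+\infty$, as claimed.
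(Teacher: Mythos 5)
Your proposal is correct and takes essentially the same route as the paper: both combine Proposition \ref{proposition:f-and-fmin-error-upperbound} with (\ref{equ:sequence-error-square-upperbound}) to derive an affine contraction $\lambda_{k+1}\leq c\,\lambda_{k}+w_{k}$ with $c<1$ and $\{w_{k}\}$ summable (summable precisely because, as you observe, it is dominated by the sequences $\Vert e^{k}\Vert,\Vert\theta^{k}\Vert,\Vert\delta^{k}\Vert$ themselves rather than by their squares), and then sum the recursion after reindexing. The only differences are cosmetic constants --- you use $(p+q+r)^{2}\leq3(p^{2}+q^{2}+r^{2})$ where the paper uses $(a+b)^{2}\leq2(a^{2}+b^{2})$, so your factor $\sqrt{\sigma}$ replaces the paper's $\eta_{4}=\sqrt{4\eta_{3}/(\eta_{1}+4\eta_{3})}$ --- and you are in fact slightly more careful than the paper in restricting the contraction to $k>K_{3}$ and accounting for the finitely many initial terms.
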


\begin{proof}
There exist real numbers $0<\eta_{4}<1$ and $\eta_{5}>0$ such that%
\begin{equation}\label{eq:f-and-fmin-error-root-relation}%
\sqrt{J(x^{k+1})-J^{\ast}}\leq\eta_{4}\sqrt{J(x^{k})-J^{\ast}}+\eta_{5}(\Vert
e^{k}\Vert+\Vert\theta^{k}\Vert).
\end{equation}
To prove this claim, we use $(a+b)^{2}\leq 2(a^{2}+b^{2})$ and (\ref{equ:f-and-fmin-error-upperbound})
to get%
\begin{align}
J(x^{k+1})-J^{\ast }
&\leq \eta _{3}\left( \Vert x^{k}-x^{k+1}\Vert +\Vert
e^{k}\Vert +\Vert \theta ^{k}\Vert \right) ^{2} \nonumber\\
&\leq 2\eta _{3}\Vert
x^{k}-x^{k+1}\Vert ^{2}+2\eta _{3}(\Vert e^{k}\Vert +\Vert \theta ^{k}\Vert
)^{2},  \label{eq:f-and-fmin-error-upperbound-extended}
\end{align}%
then apply (\ref{equ:sequence-error-square-upperbound}),%
with added and subtracted $J^{\ast },$ to obtain%
\begin{align}
J(x^{k+1})-J^{\ast }& \leq \frac{4\eta _{3}}{\eta _{1}}\left(
J(x^{k})-J^{\ast }\right) -\frac{4\eta _{3}}{\eta _{1}}\left(
J(x^{k+1})-J^{\ast }\right) +\frac{2\eta _{3}}{\eta _{1}^{2}}\Vert \delta
^{k}\Vert ^{2}  \notag \\
& ~~~+2\eta _{3}(\Vert e^{k}\Vert +\Vert \theta ^{k}\Vert )^{2}.
\end{align}%
Rearranging terms yields%
\begin{eqnarray}
J(x^{k+1})-J^{\ast } &\leq &\frac{4\eta _{3}}{\eta _{1}+4\eta _{3}}\left(
J(x^{k})-J^{\ast }\right) +\frac{2\eta _{3}}{\eta _{1}(\eta _{1}+4\eta _{3})}%
\Vert \delta ^{k}\Vert ^{2}  \notag \\
&&+\frac{2\eta _{1}\eta _{3}}{\eta _{1}+4\eta _{3}}(\Vert e^{k}\Vert +\Vert
\theta ^{k}\Vert )^{2}.
\end{eqnarray}%
On the other hand, (\ref{eq:error-sum-delta-upperbound}) leads to%
\begin{equation}
\Vert \delta ^{k}\Vert ^{2}\leq \frac{1}{\underline{\tau }^{2}}\Vert
e^{k}\Vert ^{2}+\Vert \theta ^{k}\Vert ^{2}\leq \frac{1}{\hat{\tau}^{2}}%
\left( \Vert e^{k}\Vert ^{2}+\Vert \theta ^{k}\Vert ^{2}\right)
\label{eq:error-sum-delta-upperbound-extended}
\end{equation}%
with $\displaystyle\underline{\tau }:=\inf_{k}\tau _{k}$
and $\displaystyle\hat{\tau}:=\min \{1,\inf_{k}\tau _{k}\}>0$ as defined
earlier. 
Therefore,%
\begin{eqnarray}
J(x^{k+1})-J^{\ast } &\leq &\frac{4\eta _{3}}{\eta _{1}+4\eta _{3}}\left(
J(x^{k})-J^{\ast }\right)  \notag \\
&&+\left( \frac{2\eta _{3}}{\eta _{1}(\eta _{1}+4\eta _{3})}\frac{1}{\hat{%
\tau}^{2}}+\frac{2\eta _{1}\eta _{3}}{\eta _{1}+4\eta _{3}}\right) (\Vert
e^{k}\Vert +\Vert \theta ^{k}\Vert )^{2}.
\end{eqnarray}%
Using $\displaystyle\sqrt{a+b}\leq \sqrt{a}+\sqrt{b}$ gives%
\begin{eqnarray}
\sqrt{J(x^{k+1})-J^{\ast }} &\leq &\sqrt{\frac{4\eta _{3}}{\eta _{1}+4\eta
_{3}}}\sqrt{J(x^{k})-J^{\ast }}  \notag \\
&&+\sqrt{\frac{2\eta _{3}}{\eta _{1}(\eta _{1}+4\eta _{3})}\frac{1}{\hat{\tau%
}^{2}}+\frac{2\eta _{1}\eta _{3}}{\eta _{1}+4\eta _{3}}}\left( \Vert
e^{k}\Vert +\Vert \theta ^{k}\Vert \right) .
\end{eqnarray}%
Denoting $\displaystyle\eta _{4}:=\sqrt{\frac{\displaystyle4\eta _{3}}{%
\displaystyle\eta _{1}+4\eta _{3}}}$ and $\displaystyle\eta _{5}:=\sqrt{%
\frac{\displaystyle2\eta _{3}}{\displaystyle\eta _{1}(\eta _{1}+4\eta _{3})}%
\frac{\displaystyle1}{\displaystyle\hat{\tau}^{2}}+\frac{\displaystyle2\eta
_{1}\eta _{3}}{\displaystyle\eta _{1}+4\eta _{3}}}$, we obtain (\ref{eq:f-and-fmin-error-root-relation}) and,
from the definition of $\eta _{4}$ and the fact that $\eta _{1}>0,\eta
_{3}>0 $,
\begin{equation}
0<\eta _{4}<1.
\end{equation}
It follows from (\ref{eq:f-and-fmin-error-root-relation}) that
\begin{equation}
\lambda_{k+1}\leq\eta_{4}\lambda_{k}+\eta_{5}(\Vert e^{k}\Vert+\Vert\theta
^{k}\Vert). \label{equ:sequence-omega-relation}%
\end{equation}
Then, for all $M>N$,
\begin{align}
\sum_{k=N+1}^{M}\lambda_{k}
&=\sum_{k=N}^{M-1}\lambda_{k+1}\nonumber\\
&\leq\eta_{4}
\sum_{k=N}^{M-1}\lambda_{k}+\eta_{5}\sum_{k=N}^{M-1}(\Vert e^{k}\Vert
+\Vert\theta^{k}\Vert)\nonumber\\
&  \leq\eta_{4}\lambda_{N}+\eta_{4}\sum_{k=N+1}^{M}\lambda_{k}+\eta_{5}%
\sum_{k=N}^{M}(\Vert e^{k}\Vert+\Vert\theta^{k}\Vert).
\end{align}
Consequently,
\begin{equation}
\sum_{k=N+1}^{M}\lambda_{k}\leq\frac{\eta_{4}}{1-\eta_{4}}\lambda_{N}+\frac
{\eta_{5}}{1-\eta_{4}}\sum_{k=N}^{M}(\Vert e^{k}\Vert+\Vert\theta^{k}\Vert).
\end{equation}
And hence,
\begin{equation}
\sum_{k=N+1}^{\infty}\lambda_{k}\leq\frac{\eta_{4}}{1-\eta_{4}}\lambda_{N}%
+\frac{\eta_{5}}{1-\eta_{4}}\sum_{k=N}^{\infty}(\Vert e^{k}\Vert+\Vert
\theta^{k}\Vert).
\end{equation}
The proof now follows by (\ref{eq:summable-perturbations}), (\ref{eq:grad-dev-summable}).
\end{proof}

Finally, we are ready to prove that sequences generated by the PSG method with
bounded outer perturbations of (\ref{eq:PSG-with-summable-perturbations})
converge to a stationary point in $S$. We do this by combining Proposition
\ref{proposition:sequence-error-estimation}, Proposition
\ref{proposition:residual-upper-bound} and Proposition
\ref{proposition:convergence-of-series-omegan}.

\begin{theorem}
\label{theorem:convergence-sequence-points}Under the conditions of Proposition
\ref{proposition:descent-condition-bound}, if $\{x^{k}\}_{k=0}^{\infty}$ is
any sequence generated by the PSG method with bounded outer perturbations of
(\ref{eq:PSG-with-summable-perturbations}), then it converges to a stationary point of the
problem (\ref{eq:convex-minimization}), i.e., to a point in $S$.
\end{theorem}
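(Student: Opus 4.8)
The plan is to show that $\{x^{k}\}_{k=0}^{\infty}$ is a Cauchy sequence and that its limit lies in $S$. The driving force is Proposition \ref{proposition:convergence-of-series-omegan}, which supplies $\sum_{k=0}^{\infty}\lambda_{k}<+\infty$ with $\lambda_{k}=\sqrt{J(x^{k})-J^{\ast}}$, combined with the step-size estimate of Proposition \ref{proposition:sequence-error-estimation}.

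First I would establish that $\sum_{k=0}^{\infty}\Vert x^{k}-x^{k+1}\Vert<+\infty$. The key observation is that, since $J(x^{k})\geq J^{\ast}$ for every $k$, one has
\begin{equation}
|J(x^{k})-J(x^{k+1})|\leq(J(x^{k})-J^{\ast})+(J(x^{k+1})-J^{\ast})=\lambda_{k}^{2}+\lambda_{k+1}^{2},
\end{equation}
so that $|J(x^{k})-J(x^{k+1})|^{1/2}\leq\lambda_{k}+\lambda_{k+1}$. Inserting this into (\ref{equ:sequence-error-upperbound}) gives
\begin{equation}
\Vert x^{k}-x^{k+1}\Vert\leq\sqrt{\frac{2}{\eta_{1}}}\,(\lambda_{k}+\lambda_{k+1})+\frac{1}{\eta_{1}}\Vert\delta^{k}\Vert.
\end{equation}
Summing over $k$, the first contribution is finite by Proposition \ref{proposition:convergence-of-series-omegan}, while the second is finite because $\Vert\delta^{k}\Vert\leq\frac{1}{\underline{\tau}}\Vert e^{k}\Vert+\Vert\theta^{k}\Vert$ from (\ref{eq:error-sum-delta}), and both $\sum\Vert e^{k}\Vert$ and $\sum\Vert\theta^{k}\Vert$ converge by (\ref{eq:summable-perturbations}) and (\ref{eq:grad-dev-summable}).

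Since $\sum_{k}\Vert x^{k}-x^{k+1}\Vert<+\infty$, the sequence $\{x^{k}\}$ is Cauchy in $\mathbb{R}^{n}$ and therefore converges to some $x^{\infty}$; as $\Omega$ is closed and $x^{k}\in\Omega$ for all $k\geq1$, the limit satisfies $x^{\infty}\in\Omega$. To identify $x^{\infty}$ as a stationary point, I would invoke Proposition \ref{proposition:residual-upper-bound}: letting $k\to\infty$ in (\ref{equ:residual-upper-bound}) and using $\Vert x^{k}-x^{k+1}\Vert\to0$, $\Vert e^{k}\Vert\to0$ and $\Vert\theta^{k}\Vert\to0$ yields $\lim_{k\to\infty}\Vert r(x^{k})\Vert=0$. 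Because $P_{\Omega}$ is nonexpansive (Lemma \ref{lemma:projection-onto-convex-sets}) and $\nabla J$ is continuous, the residual function $r$ of (\ref{eq:residual-function}) is continuous, whence $r(x^{\infty})=\lim_{k\to\infty}r(x^{k})=0$, i.e., $x^{\infty}\in S$.

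The only delicate point is the square-root bookkeeping in the first step: the argument hinges on converting the function-value gap $|J(x^{k})-J(x^{k+1})|^{1/2}$ into the summable quantity $\lambda_{k}+\lambda_{k+1}$, which is legitimate precisely because every iterate has objective value at least $J^{\ast}$. Once summability of $\{\Vert x^{k}-x^{k+1}\Vert\}$ is secured, the Cauchy argument and the continuity-of-$r$ argument are entirely routine.
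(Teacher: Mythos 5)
Your proposal is correct and follows essentially the same route as the paper's own proof: bounding $|J(x^{k})-J(x^{k+1})|^{1/2}$ by $\lambda_{k}+\lambda_{k+1}$, combining Proposition \ref{proposition:convergence-of-series-omegan} with Proposition \ref{proposition:sequence-error-estimation} to get $\sum_{k=0}^{\infty}\Vert x^{k}-x^{k+1}\Vert<+\infty$, and then using Proposition \ref{proposition:residual-upper-bound} to conclude $r(x^{\infty})=0$. You merely make explicit two details the paper leaves implicit, namely the summability of $\Vert\delta^{k}\Vert$ via $\Vert\delta^{k}\Vert\leq\frac{1}{\underline{\tau}}\Vert e^{k}\Vert+\Vert\theta^{k}\Vert$ and the continuity of the residual function $r$.
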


\begin{proof}
Obviously,%
\begin{align}
|J(x^{k})-J(x^{k+1})|^{1/2}  &\leq \left(  |J(x^{k})-J^{\ast}|+|J(x^{k+1}%
)-J^{\ast}|\right)  ^{1/2}\nonumber\\
&\leq \lambda_{k}+\lambda_{k+1},
\end{align}
which implies, by Proposition \ref{proposition:convergence-of-series-omegan},
that%
\begin{equation}
\sum_{k=0}^{\infty}|J(x^{k})-J(x^{k+1})|^{1/2}<+\infty.
\end{equation}
This, along with Proposition \ref{proposition:sequence-error-estimation},
guarantees that%
\begin{equation}
\sum_{k=0}^{\infty}\Vert x^{k}-x^{k+1}\Vert<+\infty,
\end{equation}
which implies that the sequence $\{x^{k}\}_{k=0}^{\infty}$ generated by (\ref{eq:PSG-with-summable-perturbations})--(\ref{eq:grad-dev-summable}) converges. Denoting $x^{\ast}:=\lim_{k\rightarrow\infty}x^{k}$ and using Proposition
\ref{proposition:residual-upper-bound} we get from
(\ref{equ:residual-upper-bound}) that $\Vert r(x^{\ast})\Vert=0$, i.e.,
$x^{\ast}\in S$, and the proof is complete.
\end{proof}

\section{Bounded Perturbation Resilience of PSG Methods\label{sect:BPR-PSG}}

In this section, we prove the bounded perturbation resilience (BPR) of PSG
methods. This property is fundamental for the application of the
superiorization methodology (SM) to them. We do this by establishing a
relationship between BPR and bounded outer perturbations given by
(\ref{eq:PSG-with-summable-perturbations-introduction}%
)--(\ref{eq:summable-perturbations-introduction}).

\subsection{Bounded Perturbation Resilience\label{subsect:BPR}}

The superiorization methodology (SM) of
\cite{Censor2010Perturbation,Censor2013Projected,Herman2012Superiorization} is
intended for nonlinear constrained minimization (CM) problems of the form:%
\begin{equation}
\label{eq:constrained-minimization}\mathrm{minimize}\left\{  \phi(x)\mid
x\in\Psi\right\}  ,
\end{equation}
where $\phi:\mathbb{R}^{n}\rightarrow\mathbb{R}$ is an objective function and
$\Psi\subseteq{\mathbb{R}^{n}}$ is the \textit{solution set} of another
problem. The set $\Psi$ could be the solution set of a \textit{convex
feasibility problem} (CFP) of the form: find a vector $x^{\ast}\in\Psi
:=\cap_{i=1}^{I}C_{i},$ where the sets $C_{i}\subseteq\mathbb{R}^{n}$ ($1 \leq
i \leq I$) are closed convex subsets of the Euclidean space $\mathbb{R}^{n}$,
see, e.g.,
\cite{Bauschke1996projection,Byrne2008Applied,Chinneck2008Feasibility} or
\cite[Chapter 5]{Censor1997Parallel} for results and references on this broad
topic. In such a case we deal in (\ref{eq:constrained-minimization}) with a
standard CM problem. Here we are interested in the case wherein $\Psi$ is the
solution set of another CM, namely the one presented at the beginning of the
paper,%
\begin{equation}
\label{eq:CM-for-solution-set}\mathrm{minimize}\left\{  J(x)\mid x\in
\Omega\right\}  ,
\end{equation}
i.e., we wish to look at,%
\begin{equation}
\label{eq:constraint-by-solution-set}\Psi:=\left\{  x^{\ast}\in\Omega\mid
J(x^{\ast})\leq J(x)\text{ for all }x\in\Omega\right\}  ,
\end{equation}
assuming that $\Psi$ is nonempty.

In either case, or any other case of the set $\Psi$, the SM strives not to
solve (\ref{eq:constrained-minimization}) but rather the task is to find a
point in $\Psi$ that is \textit{superior} (i.e., has a lower, but not
necessarily minimal, value of the $\phi$ objective function value) to one
returned by an algorithm that solves (\ref{eq:CM-for-solution-set}) alone.
This is done in the SM by first investigating the bounded perturbation
resilience of an algorithm designed to solve (\ref{eq:CM-for-solution-set})
and then proactively using such permitted perturbations in order to steer the
iterates of such an algorithm toward lower values of the $\phi$ objective
function while not loosing the overall convergence to a point in $\Psi$. See
\cite{Censor2010Perturbation,Censor2013Projected,Herman2012Superiorization}
for details of the SM. A recent review of superiorization-related previous
work appears in \cite[Section 3]{Censor2013Projected}.

In this paper we do not perform superiorization of any algorithm. Such
superiorization of the EM algorithm with total variation (TV) serving as the
$\phi$ objective function and an application of the approach to an inverse
problem of image reconstruction for bioluminescence tomography will be
presented in a sequel paper. Our aim here is to pave the way for such an
application by proving the bounded perturbation resilience that is needed in
order to do superiorization.

For technical reasons that will become clear as we proceed, we introduce an
additional set $\Theta$ such that $\Psi\subseteq\Theta\subseteq{\mathbb{R}%
^{n}}$ and assume that we have an \textit{algorithmic operator}
$\boldsymbol{A}_{\Psi}:\mathbb{R}^{n}\rightarrow\Theta$, that defines a
\textit{Basic Algorithm} as follows.

\begin{algorithm}
\label{alg:basic} \textbf{The Basic Algorithm}

\textbf{Initialization}: $x^{0}\in\Theta$ is arbitrary;

\textbf{Iterative Step}: Given the current iterate vector $x^{k}$, calculate
the next iterate $x^{k+1}$ by%
\begin{equation}
\label{eq:basic-algorithm}x^{k+1}=\boldsymbol{A}_{\Psi}\left(  x^{k}\right)  .
\end{equation}

\end{algorithm}

The bounded perturbation resilience (henceforth abbreviated by BPR) of such a
basic algorithm is defined next.

\begin{definition}
\label{definition-BPR}\textbf{\textit{Bounded Perturbation Resilience (BPR)}}
An algorithmic operator $\boldsymbol{A}_{\Psi}:\mathbb{R}^{n}\rightarrow\Theta$ is said
to be \textit{bounded perturbations resilient} if the following holds. If
Algorithm \ref{alg:basic} generates sequences $\{x^{k}\}_{k=0}^{\infty}$ with
$x^{0}\in\Theta,$ that converge to points in $\Psi,$ then any sequence
$\{y^{k}\}_{k=0}^{\infty}$, starting from any $y^0 \in \Theta$, generated by%
\begin{equation}
y^{k+1}=\boldsymbol{A}_{\Psi}\left(  y^{k}+ \beta_k v^{k}\right)  ,\text{ for all
}k\geq0,\label{eq:super-perturbed}%
\end{equation}
where ($i$) the vector sequence $\{v^{k}\}_{k=0}^{\infty}$
is bounded, and ($ii$) the scalars $\{\beta_{k}\}_{k=0}^{\infty}$ are such that
$\beta_{k}\geq0$ for all $k\geq0$, and
${\sum_{k=0}^{\infty}} \beta_{k}<\infty,$ and ($iii$) $y^{k}+ \beta_k v^{k}\in\Theta$ for all $k\geq0,$
also converges to a point in $\Psi$.
\end{definition}

Comparing this definition with \cite[Definition 1]{Censor2010Perturbation},
\cite[Subsection II.C]{Herman2012Superiorization} and \cite[Definition
4.2]{Censor2013Projected}, we observe that ($iii$) in Definition
\ref{definition-BPR} above is needed only if $\Theta\neq\mathbb{R}^{n}$. In
that case, the condition ($iii$) of Definition \ref{definition-BPR} above is
enforced in the superiorized version of the basic algorithm, see step (xiv) in
the \textquotedblleft Superiorized Version of Algorithm P\textquotedblright%
\ in \cite[p. 5537]{Herman2012Superiorization} and step (14) in
\textquotedblleft Superiorized Version of the ML-EM
Algorithm\textquotedblright\ in \cite[Subsection II.B]%
{Garduno2013Superiorization}. This will be the case in the present work.

An important special case, from which the superiorization methodology
originally grew and developed, is when $\Psi$ is the solution set of the
(linear) convex feasibility problem and $A_{\Psi}$ is a string-averaging
projection method. This was discussed and experimented with for problems of
image reconstruction from projections wherein the function $\phi$ of
(\ref{eq:constrained-minimization}) was the total variation (TV) of the image
vector $x,$ see \cite{Butnariu2007Stable,Davidi2009Perturbation}.

Note also that in later works
\cite{Censor2013Projected,Herman2012Superiorization} the notion of BPR was
replaced by that of \textit{strong perturbation resilience} which caters to
situations where $\Psi$ might be empty, however we still work here with the
above asymptotic notion of BPR and assume that $\Psi$ is nonempty. Treating
the PSG method as the Basic Algorithm $A_{\Psi}$, our strategy was to first
prove convergence of the PSG iterative algorithm with bounded outer
perturbations, i.e., convergence of
\begin{equation}
x^{k+1}=P_{\Omega}(x^{k}-\tau_{k}D(x^{k})\nabla J(x^{k})+e^{k}).
\label{eq:PSG-with-outer-perturbations}%
\end{equation}
We show next how the convergence of this yields BPR according to Definition
\ref{definition-BPR}. Such a two steps strategy was also applied in \cite[p.
541]{Butnariu2007Stable}.

A superiorized version of any Basic Algorithm\ employs the perturbed version
of the Basic Algorithm as in (\ref{eq:super-perturbed}). A
certificate to do so in the superiorization method, see \cite{censor-weak-14},
is gained by showing that the Basic Algorithm is BPR (or strongly perturbation
resilient, a notion not discussed in the present paper). Therefore, proving
the BPR of an algorithm is the first step toward superiorizing it. This is
done for the PSG method in the next subsection.

\subsection{The BPR of PSG Methods as a Consequence of Bounded Outer
Perturbation Resilience}

In this subsection, we prove the BPR of the PSG method whose iterative step is
given by (\ref{eq:PSG}). To this end we treat the right-hand side of
(\ref{eq:PSG}) as the algorithmic operator $\boldsymbol{A}_{\Psi}$ of
Definition \ref{definition-BPR}, namely, we define for all $k\geq0$,
\begin{equation}
\label{eq:algorithmic-operator-realization}\boldsymbol{A}_{\Psi}\left(
x^{k}\right)  := P_{\Omega}(x^{k} -\tau_{k} D(x^{k}) \nabla J(x^{k})),
\end{equation}
and identify the solution set $\Psi$ there with the set $S$ of
(\ref{eq:stationary-points-set}), and identify the additional set $\Theta$
there with the constraint set $\Omega$ of (\ref{eq:convex-minimization}).

According to Definition \ref{definition-BPR}, we need to show convergence of
any sequence $\{x^{k}\}_{k=0}^{\infty}$ that, starting from any $x^{0}
\in\Omega$, is generated by
\begin{align}
\label{eq:super-PSG}x^{k+1}  &  = P_{\Omega}\left(  (x^{k} + \beta_{k} v^{k})
-\tau_{k} D(x^{k} + \beta_{k} v^{k} ) \nabla J(x^{k} + \beta_{k}
v^{k})\right)  ,
\end{align}
for all $k \geq0$, to a point in $S$ of (\ref{eq:stationary-points-set}),
where $\{v^{k}\}_{k=0}^{\infty}$ and $\{\beta_{k}\}_{k=0}^{\infty}$ obey the
conditions ($i$) and ($ii$) in Definition \ref{definition-BPR}, respectively,
and also ($iii$) in Definition \ref{definition-BPR} holds.

The next theorem establishes the bounded perturbation resilience of the PSG
methods. The proof idea is to build a relationship between BPR and the
convergence of PSG methods with bounded outer perturbations of
(\ref{eq:PSG-with-summable-perturbations-introduction}%
)--(\ref{eq:summable-perturbations-introduction}).

We caution the reader that we introduce below the assumption that the set
$\Omega$ is bounded. This forces us to modify the problems
(\ref{eq:WLS-minimization}) and (\ref{eq:KL-minimization}) by replacing
$\Omega_{0}$ with some bounded subset of it in order to apply our results.
While this is admittedly a mathematically weaker result than we hoped for, we
note that this would not be a harsh limitation in practical applications
wherein such boundedness can be achieved from problem-related practical considerations.

\begin{theorem}
Given a nonempty closed convex and bounded set $\Omega \subseteq \mathbb{R}^{n}$, assume that $J \in \mathcal{S}_{\mu,L}^{1,1}(\Omega)$ (i.e., $J$ obeys (\ref{eq:Lipschitz-gradient}) and (\ref{eq:stron-conv})) and there exists at least one point $x_{\Omega} \in \Omega$ such that $\Vert \nabla J(x_{\Omega} ) \Vert<+\infty$. Let $\{\tau_{k}\}_{k=0}^{\infty}$ be a sequence of positive scalars that fulfills (\ref{eq:stepsize-bound}), $\{D(x)\}_{k=0}^\infty$ be a sequence of diagonal scaling matrices that is either of form (\ref{eq:LS-scaling-matrix}) or (\ref{eq:KL-scaling-matrix-general}), and let $\{\theta^{k}\}_{k=0}^{\infty}$ be as in (\ref{eq:theta-k}) and for which (\ref{eq:grad-dev-summable}) holds. Under these assumptions, if the vector sequence $\{v^{k}\}_{k=0}^{\infty}$ is bounded and the scalars $\{\beta_{k}\}_{k=0}^{\infty}$ are such that $\beta_{k}\geq0$ for all $k\geq0$, and $\sum_{k=0}^{\infty}\beta_{k}<\infty$, then, for any $x^0 \in \Omega$, any sequence $\{x^k\}_{k=0}^\infty$, generated by (\ref{eq:super-PSG}) such that $x^k + \beta_k v^k \in \Omega$ for all $k \geq 0$, converges to a point in $S$ of (\ref{eq:stationary-points-set}).
\end{theorem}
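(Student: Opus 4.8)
The plan is to reduce this bounded-perturbation-resilience statement to the already-established convergence of the PSG method with summable outer perturbations, Theorem~\ref{theorem:convergence-sequence-points}. Observe that the superiorized iteration (\ref{eq:super-PSG}) can be rewritten in the outer-perturbed form (\ref{eq:PSG-with-summable-perturbations}) by placing the entire discrepancy between evaluating the scaled gradient at $x^{k}+\beta_{k}v^{k}$ and at $x^{k}$ into a single outer perturbation term. Concretely, I would set
\begin{equation}
e^{k}:=\beta_{k}v^{k}-\tau_{k}\left( D(x^{k}+\beta_{k}v^{k})\nabla J(x^{k}+\beta_{k}v^{k})-D(x^{k})\nabla J(x^{k})\right) ,
\end{equation}
so that the argument of $P_{\Omega}$ in (\ref{eq:super-PSG}) equals $x^{k}-\tau_{k}D(x^{k})\nabla J(x^{k})+e^{k}$ exactly. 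With this identification, the sequence $\{x^{k}\}_{k=0}^{\infty}$ generated by (\ref{eq:super-PSG}) is precisely one generated by (\ref{eq:PSG-with-summable-perturbations}), and the only thing left to check before invoking Theorem~\ref{theorem:convergence-sequence-points} is that this particular $e^{k}$ satisfies the summability requirement (\ref{eq:summable-perturbations}).

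The crux of the argument, and the place where the boundedness hypothesis on $\Omega$ is genuinely used, is to show $\sum_{k=0}^{\infty}\Vert e^{k}\Vert<+\infty$. Since $x^{k}\in\Omega$ (it is a projection onto $\Omega$ for $k\geq1$, and $x^{0}\in\Omega$ by assumption) and $x^{k}+\beta_{k}v^{k}\in\Omega$ by condition (iii), both evaluation points lie in the compact set $\Omega$. I would first note that $\nabla J$ is bounded on $\Omega$: the Lipschitz bound (\ref{eq:Lipschitz-gradient}) together with finiteness of $\Vert\nabla J(x_{\Omega})\Vert$ gives $\Vert\nabla J(x)\Vert\leq\Vert\nabla J(x_{\Omega})\Vert+L\,\mathrm{diam}(\Omega)$ for all $x\in\Omega$. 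Next, for either admissible form of the scaling matrix, (\ref{eq:LS-scaling-matrix}) (constant diagonal) or (\ref{eq:KL-scaling-matrix-general}) (diagonal entries linear in $x$), the matrix-valued map $x\mapsto D(x)$ is bounded and Lipschitz on the bounded set $\Omega$. Writing the difference as
\begin{equation}
D(x+h)\nabla J(x+h)-D(x)\nabla J(x)=D(x+h)\left( \nabla J(x+h)-\nabla J(x)\right) +\left( D(x+h)-D(x)\right) \nabla J(x),
\end{equation}
and combining boundedness of $D$, the Lipschitz property of $\nabla J$, the Lipschitz property of $D$, and boundedness of $\nabla J$, yields a Lipschitz estimate $\Vert D(x+h)\nabla J(x+h)-D(x)\nabla J(x)\Vert\leq C\Vert h\Vert$ on $\Omega$ for some constant $C>0$. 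Applying this with $h=\beta_{k}v^{k}$ and using $\tau_{k}\leq\overline{\tau}$ gives $\Vert e^{k}\Vert\leq(1+\overline{\tau}C)\beta_{k}\Vert v^{k}\Vert$, which is summable because $\{v^{k}\}$ is bounded and $\sum_{k}\beta_{k}<\infty$.

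With summability of $\{e^{k}\}$ in hand, I would verify the remaining hypotheses of Theorem~\ref{theorem:convergence-sequence-points}, namely those of Proposition~\ref{proposition:descent-condition-bound}: $\Omega$ is nonempty closed convex, $J\in\mathcal{S}_{\mu,L}^{1,1}(\Omega)$ supplies strong convexity and Lipschitz gradient with $L\geq\mu$, the stepsizes obey (\ref{eq:stepsize-bound}), and $\sum_{k}\Vert\theta^{k}\Vert<+\infty$ is assumed in the statement (note that $\theta^{k}$ in (\ref{eq:theta-k}) is tied to the base point $x^{k}$, matching the rewriting above). Compactness of $\Omega$ and continuity of $J$ further ensure that (\ref{eq:convex-minimization}) attains its minimum, so that $S\neq\emptyset$ and $J^{\ast}=\inf_{x\in\Omega}J(x)$ is finite, as required. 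Theorem~\ref{theorem:convergence-sequence-points} then applies to the rewritten iteration and delivers convergence of $\{x^{k}\}_{k=0}^{\infty}$ to a point in $S$, which is exactly the asserted bounded perturbation resilience. The main obstacle is the Lipschitz estimate for $x\mapsto D(x)\nabla J(x)$ on $\Omega$; boundedness of $\Omega$ is precisely what rescues both the boundedness of $\nabla J$ (which can blow up near the boundary of $\Omega_{0}$ in the KL case) and the Lipschitz control of $D$, and this is why the theorem is stated for bounded $\Omega$ rather than for $\Omega_{0}=\mathbb{R}_{+}^{n}$.
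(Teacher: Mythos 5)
Your proposal is correct and takes essentially the same route as the paper's own proof: the same identification of the outer perturbation $e^{k}$, the same use of the boundedness of $\Omega$ together with the Lipschitz property of $\nabla J$ to bound $\Vert\nabla J\Vert$ on $\Omega$, and the same product-rule decomposition of $D(x+h)\nabla J(x+h)-D(x)\nabla J(x)$ leading to $\Vert e^{k}\Vert\leq C\beta_{k}$, after which Theorem \ref{theorem:convergence-sequence-points} is invoked. The only cosmetic difference is that you handle the two scaling-matrix forms uniformly via boundedness and Lipschitz continuity of $x\mapsto D(x)$, whereas the paper treats cases (\ref{eq:LS-scaling-matrix}) and (\ref{eq:KL-scaling-matrix-general}) separately with explicit Frobenius-norm estimates.
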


\begin{proof}
The proof is in two steps. For the first step, we build a relationship between (\ref{eq:super-PSG}) and bounded outer perturbations of (\ref{eq:PSG-with-summable-perturbations-introduction})--(\ref{eq:summable-perturbations-introduction}). For the second step, we invoke Theorem \ref{theorem:convergence-sequence-points} and establish the convergence result.
\newline \textbf{Step 1.}  We show that any sequence generated by (\ref{eq:super-PSG}) satisfies
\begin{equation}\label{eq:super-PSG-with-outer-perturbations}
x^{k+1} = P_{\Omega} \left(x^{k} - \tau_k D(x^{k}) \nabla J(x^{k})+ e^k \right),
\end{equation}
with $\sum_{k=0}^\infty \Vert e^k \Vert < +\infty$. Since $\Omega$ is a bounded subset of $\mathbb{R}^n$, there exists a $r_{\Omega} > 0$ such that $\Omega \subseteq B( x_{\Omega}, r_{\Omega})$, where $B( x_{\Omega}, r_{\Omega}) \subseteq \mathbb{R}^n$ is a ball centered at $x_{\Omega}$ with radius  $r_{\Omega}$. Then, for any $x \in \Omega$,
\begin{align}\label{eq:x-boundness}
\Vert x - x_{\Omega} \Vert \leq r_{\Omega} ~\Rightarrow~ \Vert x \Vert \leq \Vert x_{\Omega} \Vert + r_{\Omega}.
\end{align}
The Lipschitzness of $\nabla J(x)$ on $\Omega$ and (\ref{eq:x-boundness}) imply that, for any $x \in \Omega$,
\begin{align}\label{eq:gradient-jx-boundness}
\Vert\nabla J(x) - \nabla J(x_{\Omega}) \Vert \leq L \|x - x_{\Omega} \Vert \Rightarrow  \Vert\nabla J(x)\Vert \leq \Vert\nabla J(x_{\Omega})\Vert + L r_{\Omega}.
\end{align}
Since the sequence $\{x^k\}_{k=0}^\infty$ generated by (\ref{eq:super-PSG}) is contained in $\Omega$, due to the projection operation $P_{\Omega}$, and $x^k + \beta_k v^k$ is also in $\Omega$, it holds that, for all $k\geq 0$, $x^k$ and $x^k + \beta_k v^k$ satisfy (\ref{eq:x-boundness}), and that $\nabla J(x^k)$ and $\nabla J(x^k + \beta_k v^k)$ satisfy (\ref{eq:gradient-jx-boundness}). Besides, the boundness of ${\{v^k\}}_{k=0}^\infty$ implies that there exist a $\overline{v}>0$ such that $\|v^k\| \leq \overline{v}$ for all $k\geq 0$. Therefore, we have
\begin{align}\label{eq:hatxk-xk-boundness}
\Vert \beta_k v^k\Vert \leq \overline{v}\beta_k.
\end{align}
From (\ref{eq:super-PSG}), the outer perturbation term $e^k$ of (\ref{eq:super-PSG-with-outer-perturbations}) is given by
\begin{align}\label{eq:difference-term}
e^k &= \left(x^k + \beta_k v^k  -\tau_k D(x^k + \beta_k v^k ) \nabla J( x^k + \beta_k v^k ) \right) - \left(x^k  -\tau_k D(x^k ) \nabla J(x^k ) \right) \nonumber \\
&= \beta_k v^k + \tau_k \left( D(x^k ) \nabla J(x^k ) -  D(x^k + \beta_k v^k ) \nabla J( x^k + \beta_k v^k )\right).
\end{align}
Given that $D(x)$ is either of form (\ref{eq:LS-scaling-matrix}) or (\ref{eq:KL-scaling-matrix-general}), we consider them separately. In what follows, we repeatedly use the fact that $\Vert A B x\Vert \leq \Vert A B\Vert_F \Vert x\Vert \leq \Vert A \Vert_F \Vert B\Vert_F \Vert x\Vert$ for any $A,B \in \mathbb{R}^{n \times n}$ and $x \in \mathbb{R}^n$, with $\Vert \cdot \Vert_F$ the Frobenius norm of matrix, see, e.g., \cite[Section 2.3]{Golub1996Matrix}.
\begin{description}
\item[($i$)] Assume that $D(x)$ is of form (\ref{eq:LS-scaling-matrix}), namely that $D(x) \equiv D_{\mathrm{LS}}$ for any $x$. For this case, combining (\ref{eq:difference-term}) with (\ref{eq:Lipschitz-gradient}), (\ref{eq:stepsize-bound}) and (\ref{eq:hatxk-xk-boundness}), and by the Minkowski inequality, we get
\begin{align}\label{eq:difference-term-bound-case1}
\Vert e^k \Vert &= \Vert \beta_k v^k + \tau_k D_{\mathrm{LS}}\left(\nabla J(x^k ) - \nabla J( x^k + \beta_k v^k ) \right) \Vert\nonumber \\
&\leq \Vert\beta_k v^k \Vert + \tau_k \Vert D_{\mathrm{LS}} \Vert_F \Vert \nabla J(x^k ) - \nabla J( x^k + \beta_k v^k ) \Vert \nonumber \\
&\leq \Vert\beta_k v^k \Vert + \tau_k \Vert D_{\mathrm{LS}} \Vert_F L \Vert \beta_k v^k \Vert \nonumber \\
&\leq (1 + 2 \Vert D_{\mathrm{LS}} \Vert_F) \overline{v}\beta_k.
\end{align}
\item[($ii$)]Assume that $D(x)$ is of form (\ref{eq:KL-scaling-matrix-general}), namely that $D(x) := \hat{D}X$ with $\hat{D}= \operatorname*{diag}\{1/\hat{s}_j\}$ and $X=\operatorname*{diag}\{x_j\}$ diagonal matrices. In this case, combining (\ref{eq:difference-term}) with (\ref{eq:Lipschitz-gradient}), (\ref{eq:stepsize-bound}), (\ref{eq:x-boundness}), (\ref{eq:gradient-jx-boundness}), (\ref{eq:hatxk-xk-boundness}), and by the Minkowski inequality, we get
\begin{align}
\Vert e^k \Vert
&= \Vert \beta_k v^k  + \tau_k \left(D(x^k ) \nabla J(x^k ) - D( x^k+\beta_k v^k ) \nabla J( x^k+\beta_k v^k ) \right) \Vert \nonumber\\
&=\Vert \beta_k v^k + \tau_k \left(D(x^k ) \nabla J(x^k ) - D( x^k+\beta_k v^k ) \nabla J(x^k )\right)  \nonumber\\
&\quad + \tau_k \left( D( x^k+\beta_k v^k ) \nabla J(x^k ) - D( x^k+\beta_k v^k ) \nabla J( x^k+\beta_k v^k ) \right) \Vert \nonumber\\
&\leq \Vert \beta_k v^k \Vert + \tau_k \Vert \hat{D}(X^k - \hat{X}^k) \nabla J(x^k ) \Vert \nonumber \\
&\quad + \tau_k \Vert \hat{D}\hat{X}^k ( \nabla J(x^k ) - \nabla J( x^k+\beta_k v^k ) ) \Vert \nonumber\\
&\leq  \Vert\beta_k v^k \Vert + \tau_k \Vert \hat{D} \Vert_F \Vert X^k - \hat{X}^k \Vert_F \Vert \nabla J(x^k ) \Vert + \tau_k \Vert \hat{D} \hat{X}^k \Vert_F L \Vert \beta_k v^k \Vert\nonumber \\
&\leq  (1 + \tau_k \Vert \hat{D} \Vert_F \Vert \nabla J(x^k ) \Vert + \tau_k L \Vert \hat{D} \Vert_F \Vert \hat{X}^k \Vert_F  ) \Vert \beta_k v^k \Vert \label{eq:temp-1}\\
&\leq (1 + 2 \Vert \hat{D} \Vert_F \Vert \nabla J(x^k ) \Vert / L + 2\Vert \hat{D} \Vert_F \Vert x^k + \beta_k v^k \Vert ) \overline{v}\beta_k \label{eq:temp-2}\\
&\leq \left(1 + 2 \Vert \hat{D} \Vert_F (\Vert\nabla J(x_{\Omega})\Vert /L + \Vert x_{\Omega} \Vert + 2r_{\Omega}) \right) \overline{v}\beta_k \label{eq:difference-term-bound-case2},
\end{align}
where $X^k:=\operatorname*{diag}\{x^k_j\}$, $\hat{X}^{k}:=\operatorname*{diag}\{(x^k + \beta_k v^k)_j\}$, and (\ref{eq:temp-1}) holds by the fact that $\Vert X^k - \hat{X}^k \Vert_F = \Vert x^k - (x^k + \beta_k v^k) \Vert = \Vert \beta_k v^k \Vert$, and (\ref{eq:temp-2}) holds since $\Vert \hat{X}^k \Vert_F = \Vert x^k + \beta_k v^k \Vert$, and (\ref{eq:difference-term-bound-case2}) holds by (\ref{eq:x-boundness}) and (\ref{eq:gradient-jx-boundness}).
\end{description}
Defining a constant
\begin{align}
C_{\Omega} :=
\overline{v} + 2 \overline{v} \cdot \max \left \{ \Vert D_{\mathrm{LS}} \Vert_F, ~\Vert \hat{D} \Vert_F (\Vert\nabla J(x_{\Omega})\Vert /L + \Vert x_{\Omega} \Vert + 2r_{\Omega}) \right \},
\end{align}
and considering (\ref{eq:difference-term-bound-case1}) or (\ref{eq:difference-term-bound-case2}), yields that in either case ($i$) or case ($ii$),
\begin{align}\label{eq:perturbation-upper-bound}
\Vert e^k \Vert \leq C_{\Omega} \beta_k .
\end{align}
Then, $\sum_{k=0}^\infty \beta_k < + \infty$ implies that $\sum_{k=0}^\infty \Vert e^k \Vert < +\infty$.
\newline \textbf{Step 2.} Under the given conditions, by invoking Theorem \ref{theorem:convergence-sequence-points}, we know that, for any $x^0 \in \Omega$, any sequence $\{x^k\}_{k=0}^\infty$, generated by (\ref{eq:super-PSG-with-outer-perturbations}) in which $\sum_{k=0}^\infty \Vert e^k \Vert < + \infty$, converges to a point in $S$ of (\ref{eq:stationary-points-set}). Hence, the sequence generated by (\ref{eq:super-PSG}) also converges to the same point of $S$.
\end{proof}

\section*{Acknowledgments}

We greatly appreciate the constructive comments of two anonymous reviewers and
the Coordinating Editor which helped us improve the paper. This work was
supported in part by the National Basic Research Program of China (973
Program) (2011CB809105), the National Science Foundation of China (61421062)
and the United States-Israel Binational Science Foundation (BSF) grant number 2013003.


\end{document}